\documentclass[numbers,webpdf,imanum]{ima-authoring-template}%
\usepackage{amsmath,amsfonts,amssymb,mathtools,eucal,bm}
\usepackage{stmaryrd}
\usepackage{latexsym}
\usepackage{epsfig,overpic}
\usepackage{xcolor}
\usepackage{cite}
\usepackage{booktabs}



\theoremstyle{thmstyletwo}%
\newtheorem{theorem}{Theorem}[section]
%

\newtheorem{lemma}[theorem]{Lemma}%
\newtheorem{remark}{Remark}[section]%
\newtheorem{assumption}{Assumption}[section]

\numberwithin{equation}{section}

\let\ep\varepsilon

\newcommand{\bC}{\mathbf C}

\newcommand{\bV}{\mathbf V}

\hyphenation{mathe-ma-ti-cal}



\newcommand{\bn}{\mathbf n}

\newcommand{\bng}{{\mathbf n}_\Gamma}

\newcommand{\bw}{\mathbf w}
\newcommand{\be}{\mathbf e}
\newcommand{\bx}{\mathbf x}

\newcommand{\bz}{\mathbf z}

\newcommand{\bu}{\mathbf u}

\newcommand{\bv}{\mathbf v}
\newcommand{\blf}{\mathbf f}
\newcommand{\bH}{\mathbf H}

\newcommand{\R}{\mathbb R}

\newcommand{\bpsi}{{\bm \psi}}

\newcommand{\E}{\mathcal E}

\newcommand{\Q}{\mathcal Q}

\newcommand{\Div}{\operatorname{\rm div}}


\newcommand{\dO}{\partial\Omega}
\newcommand{\whO}{\widehat\Omega}
\newcommand{\calF}{\mathcal{F}}
\newcommand{\calS}{\mathcal{S}}
\newcommand{\calO}{\mathcal{O}}
\newcommand{\calQ}{\mathcal{Q}}
\newcommand{\p}{\partial}
\newcommand{\jump}[1]{\bigl[\hspace{-0.03in}\bigl[#1\bigr]\hspace{-0.03in}\bigr]}

\newcommand{\pol}{{\rm P}}
\newcommand{\bpol}{{\bf P}}
\newcommand{\calE}{\mathcal{E}}
\newcommand{\bt}{{\bf t}}
\newcommand{\tnorm}[1]{|\!|\!| #1 |\!|\!|}
\newcommand{\bbU}{\mathbb{U}}
\newcommand{\bbP}{\mathbb{P}}
\newcommand{\Thi}{\mathcal{T}_{h,i}^n}
\newcommand{\The}{\mathcal{T}_{h,e}^n}
\newcommand{\Fhi}{\mathcal{F}_{h,i}^n}
\newcommand{\Fhe}{\mathcal{F}_{h,e}^n}
\newcommand{\Omh}{\Omega_{h}^n}
\newcommand{\Omhe}{\Omega_{h,e}^n}
\newcommand{\Omhi}{\Omega_{h,i}^n}
\newcommand{\bW}{{\bf W}}
\newcommand{\qnorm}[1]{|\!|\!| #1 |\!|\!|_{n,i}}
\newcommand{\qnorme}[1]{|\!|\!| #1 |\!|\!|_{n,e}}
\newcommand{\vnorm}[1]{|\!|\!| #1 |\!|\!|_n}
\newcommand{\vnorme}[1]{|\!|\!| #1 |\!|\!|_{n,e}}

\newcommand{\bbR}{\mathbb{R}}

\renewcommand{\O}{\mathcal{O}}
\usepackage{accents}

\newcommand{\nab}{\nabla}
\newcommand{\calT}{\mathcal{T}}

\begin{document}

\copyrightyear{2023}
\vol{}
\firstpage{1}


\title[FEM for linearized Navier--Stokes problem in an evolving domain]{An Eulerian finite element method for the linearized Navier--Stokes problem in an evolving domain}

\author{Michael Neilan*
\address{\orgdiv{Department of Mathematics}, \orgname{University of Pittsburgh}, \orgaddress{\street{Pittsburgh}, \state{PA}, \postcode{15260}, \country{USA}}}}
\author{Maxim Olshanskii
\address{\orgdiv{Department of Mathematics}, \orgname{University of Houston}, \orgaddress{\street{Houston}, \state{TX}, \postcode{77204}, \country{USA}}}}

\newcommand{\rev}[1]{\textcolor{black}{#1}}

\authormark{M.~Neilan and M.~Olshanskii}

\corresp[*]{Corresponding author: \href{neilan@pitt.edu}{neilan@pitt.edu}}

 \received{2}{8}{2023}


\abstract{
The paper addresses an error analysis of an Eulerian finite element method used for solving a linearized Navier--Stokes problem in a time-dependent domain. In this study, the domain's evolution is assumed to be known and independent of the solution to the problem at hand. The numerical method employed in the study combines a standard Backward Differentiation Formula (BDF)-type time-stepping procedure with a geometrically unfitted finite element discretization technique. Additionally, Nitsche's method is utilized to enforce the boundary conditions.
 The paper presents a convergence estimate for several velocity--pressure elements that are inf-sup stable. The estimate demonstrates optimal order convergence in the energy norm for the velocity component and a scaled $L^2(H^1)$-type norm for the pressure component.}
\keywords{Interface Stokes problem; evolving interface; cutFEM.}


\maketitle

\section{Introduction} 
Fluid equations formulated in time-dependent domains are essential components of mathematical models used in a wide range of applications, including cardiovascular research and aerospace engineering~\citep{formaggia2010cardiovascular,bazilevs2013computational}. The analysis of such equations is a classical topic in mathematical fluid mechanics~\citep{solonnikov1977solvability,miyakawa1982existence,solonnikov1987transient,neustupa2009existence}. Moreover, a significant body of literature  addresses the development of computational methods aimed at numerically solving these problems.
Well-established numerical techniques include immersed boundary methods, fictitious domain methods, methods based on Lagrangian and arbitrary Lagrangian–Eulerian formulations, space–time finite element formulations, level-set methods, and extended finite element methods; see, e.g., \citep{hirt1974arbitrary,peskin1977numerical,tezduyar1992new,masud1997space,glowinski1999distributed,formaggia1999stability,duarte2004arbitrary,gross2011numerical}.

In this paper, we focus on an Eulerian finite element method that utilizes a time-independent triangulation of $\mathbb{R}^3$ to solve a system of governing equations within a volume $\Omega(t)$ that smoothly evolves through the background mesh, a typical configuration for unfitted finite element methods. Specifically, we consider the CutFEM unfitted finite element method~\citep{cutFEM} that incorporates Nitsche's method for boundary condition imposition and employs a ghost-penalty stabilization~\citep{B10} to handle instabilities arising from arbitrary small ``cuts" made by $\Omega(t)$ within the background simplices.

For time stepping, we adopt an Eulerian procedure suggested in~\citep{lehrenfeld2019eulerian} that relies on the implicit extension of the solution from $\Omega(t)$ to its neighborhood of $\mathcal{O}(\Delta t)$. This combination of the CutFEM method and implicit extension-based time stepping was initially applied to two-phase flow problems in\citep{claus2019cutfem}, demonstrating its efficacy when used in conjunction with the level-set method for interface capturing. Recent studies in \citep{burman2022eulerian} and \citep{von2022unfitted} have addressed the analysis of this method, considering equal-order stabilized and Taylor-Hood elements, respectively. Both of these analyses identified a major challenge: the lack of a weak divergence-free property of the time difference of the finite element solutions $(\mathbf{u}_h^n-\mathbf{u}_h^{n-1})/\Delta t$ with respect to the discrete pressure space at time $t^n$. The absence of this property makes it challenging to bound this term in a suitable norm and precluding optimal-order estimates for the pressure. This observation has also been made in the literature on adaptive-in-time finite element methods, where the pressure space varies in time due to mesh adaptation~\citep{besier2012pressure,brenner2014priori}. The use of equal-order finite elements and pressure stabilization in \citep{burman2022eulerian} allows the authors to establish the optimal error estimate for velocity. However, for inf-sup stable Taylor-Hood elements, the coupling between pressure and velocity appears stronger, and the sub-optimality in pressure also hindered the authors of \citep{von2022unfitted} from obtaining the optimal order estimation for the velocity error. It is worth noting that \citep{von2022unfitted} also quantified the error resulting from an approximate reconstruction of the evolving ``exact" domain, $\Omega(t)$.

Despite the aforementioned theoretical challenges, numerical experiments have demonstrated 
optimal order convergence rates~\citep{von2022unfitted}. This raises the question of whether the analysis can be enhanced to provide support for the observed numerical evidence. This is the question addressed in the present paper. The setup of the problem and the methods here is similar to~\citep{von2022unfitted}, but we consider general inf-sup stable unfitted finite element pairs, essentially those covered in the analysis by Guzmán et al.~\citep{guzman2016inf}.

The main result established in this paper can be summarized as follows: Optimal convergence rates are proven for the energy norm of velocity and a scaled $L^2(H^1)$-norm of the pressure under the constraint $h^2\lesssim \Delta t\lesssim h$, where $h$ represents the mesh size and $\Delta t$ denotes the time step. This  bridges the gap in the analysis up to the selection of the pressure norm. Notably, the use of a non-standard pressure norm is vital in mitigating the lack of divergence-free property in the discrete time derivative. This argument aligns with the analysis in a recent study \citep{olshanskii2023eulerian}, which analyzed a finite element method for the Navier-Stokes equations posed on time-dependent surfaces.

In general, there is a scarcity of literature addressing error bounds for fully discrete solutions of fluid equations in evolving domains. However, under the simplifying assumption that the motion of the domain is given and decoupled from the flow solution, error bounds for the Arbitrary Lagrangian-Eulerian (ALE) and quasi-Lagrangian finite element methods for Stokes, Navier-Stokes and coupled Stokes--parabolic equations in moving domains can be found in~\citep{san2009convergence,lozovskiy2018quasi,KeslerLanSun21}. Similarly, error bounds for the unfitted characteristic finite element method within the same setup are provided in~\citep{ma2023high}. 

The remainder of the paper is organized in five sections and an appendix. Section~\ref{s:form} formulates the linearized Navier-Stokes problem in evolving domains
and introduces suitable extension operators utilized in the analysis.
In particular, the numerical analysis relies on existence of a sufficiently regular  divergence-free extension of the fluid velocity field in a neighborhood of $\Omega(t)$.
The fully discrete numerical method based on a Nitsche-based CutFEM formulation is given in Section~\ref{s:FEM}. 
Here, we present the scheme for general finite element Stokes pairs satisfying certain assumptions.
Stability and convergence analysis is the subject of Section~\ref{s:NA}.
In Section \ref{sec:Examples}, we list three standard finite element pairs
satisfying the assumptions. Finally, a proof of
a `discrete' trace estimate is found in Appendix \ref{app:Dtrace}.

\section{Problem formulation} \label{s:form}

We consider a time-dependent domain $\Omega(t)\subset \bbR^3$
with boundary $\Gamma(t):=\p \Omega(t)$ whose motion is assumed to
be known a priori. In particular, we assume
a smooth solenoidal vector field $\bw:\bbR^3\times [0,T]\to \bbR^3$, 
for some final time $T>0$
such that the normal velocity of the boundary is specified via
\begin{equation}\label{VG}
V_\Gamma=\bw\cdot\bng\quad \text{on}~\Gamma(t),
\end{equation}
 where $\bn_{\Gamma}$ denotes the outward unit normal 
 of $\Gamma(t)$.
We then consider the Oseen problem in the moving volume $\Omega(t)$:
	\begin{equation} \label{eqn:Stokes}
		\begin{split}
			\bu_t+(\bw\cdot\nabla)\bu- \Delta \bu +\nabla p &= \blf~\quad  \text{in }\Omega(t),\\
			\Div \bu & = 0~\quad \text{in }\Omega(t),\\
		\bu&=\bw\quad  \text{on }\Gamma(t),
		\end{split}
	\end{equation}
 with initial condition $\bu|_{t=0}=\bu_0$ in  $\Omega_0:=\Omega(0)$. As mentioned in the introduction, unfitted finite element methods for \eqref{eqn:Stokes} were recently addressed in~\citep{burman2022eulerian,von2022unfitted} with suboptimal error bounds.  We note that the previous studies \citep{burman2022eulerian,von2022unfitted} ignore the advection term $(\bw\cdot\nabla)\bu$ in \eqref{eqn:Stokes}.
 While this term does not lead to any additional difficulties in the analysis, we believe it is 
 mechanically relevant to include it in this simplified model.
 By a standard argument, we can re-write the above problem for  
 \begin{equation}\label{bc}
 \bu=0\quad \text{on} ~~\Gamma(t).
 \end{equation}
 
We assume the smooth velocity field $\bw\,:\,\mathbb{R}^3\times [0,T]\to\mathbb{R}^3$ is such that
it  defines the flow map $\Phi_t:\, \Omega(0) \to \Omega(t)$ as the material evolution of the fluid volume:  For $\bz \in \Omega_0$, the trajectory $\bx(t,\bz)=\Phi_t(\bz)$  solves 
 \begin{align}
 	\begin{cases}\label{def_flow_map}
 			\bx(0,\bz)=\bz,\\
 			\frac{d}{dt} \bx(t,\bz)= \bw(t,\bx(\bz,t))\qquad t\in (0,T]
 		\end{cases}
 \end{align}
for some final time $T>0$.
Equation \eqref{def_flow_map} defines a 
smooth bijection between $\Omega_0$ and $\Omega(t)$ for 
every $t\in[0,T]$. If $\dO_0\in C^p$ and $\bw\in \bC^p(\mathbb{R}^3)$, then $\Gamma(t)\in C^p$; the flow map $\Phi_t$ also preserves the connectivity of $\Omega(t)$. 

Summarizing,  we are interested in the analysis of a finite element method for  solving \eqref{eqn:Stokes} with $\Omega(t)=\Phi_t(\Omega(0))$ and homogeneous Dirichlet boundary conditions~\eqref{bc}.

\subsection{Extensions} \label{s:discretization}
Let $\Omega(t)\subset\whO$ for all $t\in [0,T]$, for a bounded polyhedral domain $\whO\subset\mathbb{R}^3$. 
We define the two space-time domains $\Q$ and $\hat \Q$ 
as follows:
\[
\Q:=\bigcup\limits_{t\in[0,T]}\Omega(t)\times\{t\}\subset \widehat\Q:=\whO\times [0,T]\subset \mathbb{R}^4.
\]
For a 
domain $D\subset \R^3$ and some $\delta>0$ we use the notation $\calO_\delta(D)$ for
the $\delta$-neighborhood of $D$:
\[
\calO_\delta(D) = \{x\in \mathbb{R}^3:\ {\rm dist}(x,D)\le \delta\}.
\]

Denoting by $\bV(t) = \{\bv \in \bH^1_0(\Omega(t)):\ {\rm div}\,\bv=0\}$, the subspace of divergence-free functions in $\bH^1_0(\Omega(t))$,
our goal now is to  define an extension operator $\E\,:\,\bV(t)\to \bH^1(\whO)$ that preserves the divergence--free condition.
To this end, we note that since  $\Div\bu=0$, we can write  $\bu = \nab \times \bpsi$ in $\Omega(t)$ with a stream function that satisfies $\bpsi\in \bW^{k+1,p}(\Omega(t))$
and
\begin{equation}\label{aux318}
\|\bpsi\|_{W^{k+1,p}(\Omega(t))}\lesssim  \|\bu\|_{W^{k,p}(\Omega(t))}\quad\text{for}~\bu\in W^{k,p}(\Omega(t)),
\end{equation}
$k\ge0$, $1<p<\infty$; see~\citep{GR,costabel2010bogovskiui}. 
\begin{remark}\rm 
Here, the statement $A \lesssim B$ (resp., $A\gtrsim B$) to mean $A\le c  B$ (resp., $A\ge c B$) for some constant $c>0$ independent of 
    the spatial and temporal discretization parameters $h$ and $\Delta t$ introduced below and time $t$.
    The statement $A\simeq B$ means $A\lesssim B$ and $A\gtrsim B$.
\end{remark}

For $\bpsi_0=\bpsi \circ \Phi_t$ we consider Stein's extension:  Since the boundary of $\Omega_0$ is smooth, there is a continuous linear extension operator $\E_{0} : L^2(\Omega_{0}) \to  L^2(\R^3)$,  ($\E_{0} \bpsi_0 = \bpsi_0$ in $\Omega_{0}$), with the following  properties~\citep[Section~VI.3.1]{stein2016singular}:
\begin{equation}\label{ExtBound0}
  \Vert {\E_{0}} \bpsi_0 \Vert_{W^{k,p}(\R^3)} \leq  C_{\Omega_{0}} \Vert \bpsi_0 \Vert_{W^{k,p}(\Omega_{0})},\quad \text{for}~\bpsi_0 \in W^{k,p}(\Omega_{0}),~~k=0,\dots,m+1,~~1\le p\le\infty,
\end{equation}
with any fixed $m\ge0$.  Here, the extension operator is performed
component-wise, i.e., $(\E_0 \bpsi_0)_i = \E_0 (\bpsi_0)_i$ for $i=1,2,3$.
For the extension $\E_{\psi} \bpsi := (\E_{0} \bpsi_0 ) \circ \Phi^{-1}_t$ of $\bpsi$,
the following estimates follow from the analysis in  \citep{lehrenfeld2019eulerian}: 
\begin{equation}\label{aux329}
\begin{aligned}
\|\E_{\psi} \bpsi \|_{H^{k}(\whO)}&\lesssim \| \bpsi\|_{H^{k}(\Omega(t))}, \quad {\small k=0,\dots,m+1},\quad
		\|\E_{\psi} \bpsi \|_{W^{4,5}(\widehat \Q)}\lesssim \|\bpsi\|_{W^{4,5}(\Q)}, \\
  	\|(\E_{\psi} \bpsi)_t\|_{H^{m}(\whO)}&\lesssim (\| \bpsi\|_{H^{m+1}(\Omega(t))}+\|\bpsi_t\|_{H^{m}(\Omega(t))}).
\end{aligned}
\end{equation}
We now define the velocity extension as follows
\begin{equation} \label{e:extensiont_cont}
\E{} \bu(t) := \nab \times (\E_{\psi} \bpsi),\quad \text{for each}~t \in [0,T].
\end{equation}
By construction there holds
\[
 \Div\E\bu=0\quad \text{in}\, \whO.
\]
For $\bu\in L^\infty(0,T;\bH^{m}(\Omega(t)))\cap \bW^{3,5}(\Q)$ such 
that $\Div\bu=0$ in $\Omega(t)$ for all $t\in(0,T)$ and any fixed integer $m\ge0$, the following estimates follow from \eqref{aux318}, \eqref{aux329}, Poincare-Friedrich's inequality, 
and the embedding $\ W^{3,5}(\widehat \Q)\subset W^{2,\infty}(\widehat \Q)$:
\begin{subequations}\label{u_bound_a}
	\begin{align}
		\|\E\bu\|_{H^{k}(\whO)}&\lesssim \|\bu\|_{H^{k}(\Omega(t))}, \quad {\small k=0,\dots,m},\label{u_bound_a1}\\
		\Vert \nabla (\E\bu) \Vert_{\whO} & \lesssim \Vert \nabla \bu \Vert_{\Omega(t)},\label{u_bound_a2}\\
		\|\E\bu\|_{W^{2,\infty}(\widehat \Q)}&\lesssim \|\bu\|_{W^{3,5}(\Q)}, \label{u_bound_a3}
	\end{align}
\end{subequations}
Here, we use the standard notation for the $L^2$-norm $\|\cdot\|_D = \|\cdot\|_{L^2(D)}$ for some domain $D$.
Furthermore, for
$\bu\in L^\infty(0,T;\bH^{m}(\Omega(t)))$ such that $\bu_t\in L^\infty(0,T;\bH^{m-1}(\Omega(t)))$ it holds
\begin{equation}\label{u_bound_b}
	\|(\E\bu)_t\|_{H^{m-1}(\whO)}\lesssim (\|\bu\|_{H^{m}(\Omega(t))}+\|\bu_t\|_{H^{m-1}(\Omega(t))}).
\end{equation}
With an abuse of notation, we define the extension
of the pressure as
\begin{align} \label{e:extensiont_cont_press}
 \E p(t) = (\E_0(p\circ \Phi_t))\circ \Phi_t^{-1},\quad \text{for each}~t\in [0,T].   
\end{align}
Then estimates \eqref{u_bound_a1},\eqref{u_bound_a3}, with $\calE \bu$ and $\bu$ replaced by
$\calE p$ and $p$, respectively, are satisfied (cf.~\citep[Lemma 3.3]{lehrenfeld2019eulerian}).
For the analysis, we need $\E\bu$ and $\E p$ defined in $\O_\delta(\Omega(t))\subset\whO$, a $\delta$-neighborhood of $\Omega(t)$ with $\delta\simeq\Delta t$.

\section{The Fully Discrete Finite Element Method} \label{s:FEM}
We adopt the basic framework
in \citep{lehrenfeld2019eulerian,von2022unfitted,burman2022eulerian} 
to build a Nitsche-based CutFEM spatial discretization
of the Stokes problem on an evolving domain.

\subsection{Approximate geometries}
Recall that $\whO\subset\mathbb{R}^3$ is a polyhedral domain
with $\Omega(t)\subset \whO$ for all $t\in [0,T]$.
For simplicity, we consider a time discretization
with a uniform time-step $\Delta t = T/N$ for some $N\in \mathbb{N}$.
We set $t_n = n \Delta t$, $\Omega^n = \Omega(t_n)$, $\Gamma^n = \Gamma(t_n)$, 
and $(\bu^n,p^n) = (\bu(t_n),p(t_n))$.
We further set $\bw^n_\infty = \|\bw(t_n)\cdot \bn_{\Gamma}\|_{L^\infty(\Gamma^n)}$.
For practical purposes such as numerical integration, and similar to \citep{lehrenfeld2019eulerian,von2022unfitted,burman2022eulerian}, 
we assume that the domains $\Omega^n$ 
are given by their
 approximations
$\Omh$ (cf.~\eqref{eqn:PhiIsGood}--\eqref{eqn:PhiIsGood2} below).
The boundary of $\Omh$ is denoted by $\Gamma_h^n$.
To ensure that discrete solutions are well defined
at subsequent time-steps, we extend 
the computational domain by a layer of thickness $\delta_h$
with $c_{\delta_h} \bw^n_\infty \Delta t\le  \delta_h$
with constant $1\le c_{\delta_h}=O(1)$ such that
 ${\rm dist}(\Omega_h^n,\Omega_h^{n+1})\le \delta_h$
for all $n$.

We assume there is a bijective, Lipschitz continuous 
map $\Psi_n: \calO_{\delta_h}(\Omh)\to \calO_{\delta_h}(\Omega^n)$
that connects the approximate and exact domains at each time step.
In particular, we assume  $\Psi_n$ satisfies
 $\calO_{\delta_h}(\Omega^n) = \Psi_n(\calO_{\delta_h}(\Omh))$,
 $\Omega^n = \Psi_n(\Omh)$, $\Gamma^n = \Psi_n(\Gamma_h^n)$, 
 and the existence
 of a positive integer $q$ such that
\begin{align}\label{eqn:PhiIsGood}
    \|\Psi_n - {\rm id}\|_{W^{j,\infty}(\calO_{\delta_h}(\Omh))} \lesssim h^{q+1-j}\qquad  j=0,1.
\end{align}
We refer to $q$ as the geometric order of approximation. 
Such a mapping has been constructed in \citep{gross2015trace}
based on isoprametric mappings of geometries defined via
level sets.  Note that \eqref{eqn:PhiIsGood} implies
\begin{equation}\label{eqn:PhiIsGood2}
{\rm dist}(\Omega^n,\Omh)\lesssim h^{q+1}.
\end{equation}

\subsection{Triangulations}

We let $\calT_h$ denote a shape-regular and quasi-uniform
simplicial triangulation of the background domain $\whO$
with $h = \max_{T\in \calT_h} {\rm diam}(T)$.
Note the quasi-uniformity implies a constant $c>0$
such that $h\le c\, {\rm diam}(T)=:h_T$ for all $T\in \calT_h$.

We then define, for each time step $n$, the active triangulation and corresponding domain induced
by the background triangulation (cf.~Figure \ref{fig:Tri}): 
\begin{align*}
\The = \{T\in \calT_h:\ {\rm dist}(\bx,\Omega_h^n)\le \delta_h\ \exists\, \bx\in \bar T\},\qquad \Omhe = {\rm int}\left(\bigcup_{T\in \The} \bar T\right).
\end{align*}
We further definite the set of interior elements for $\Omh$ and associated domain
at time step $n$:
\[
\Thi = \{T\in \The:\ {\rm int}(T)\subset \Omh\},\qquad \Omhi = {\rm int}\left(\bigcup_{T\in \Thi} \bar T\right),
\]
and denote by $\Fhi$ (resp., $\Fhe$) the set of interior faces of $\Thi$ (resp., $\The$), i.e.,
\[
\calF_{h,*}^n = \{F = \p T_1\cap \p T_2:\ T_1,T_2\in \calT_{h,*}^n,\ T_1\neq T_2\}\qquad *\in \{i,e\}.
\]
We further set $h_F = {\rm diam}(F)$ for all $F\in \calF_{h,e}$.
Following \citep{lehrenfeld2019eulerian,von2022unfitted},
we define the elements in a strip around $\Gamma_h^n$:
\[
\calT_{\Gamma_h}^n:= \{T\in \The:\ {\rm dist}(x,\Gamma_h^n)\le \delta_h\ \exists\, x\in \bar T\},
\]
and define the set of faces in this strip:
\[
\calF_{\Gamma_h}^n:= \{F=\p T_1\cap \p T_2:\ T_1\in \calT_{h,e}^n,\ T_2\in \calT_{\Gamma_h}^n,\ T_1\neq T_2,\ |\p T_1\cap \p T_2|>0\}.
\]

For any sub-triangulation $\calS_h \subset \calT_h$ and $m\in \mathbb{N}$, 
we set $H^m(\calS_h)$ to be the piecewise Sobolev space
with respect to $\calS_h$, i.e., $q\in H^m(\calS_h)$ implies
$q$ is an $L^2$ function on the domain induced by $\calS_h$
and $q|_T\in H^m(T)$ for all $T\in \calS_h$.
Analogous vector-valued spaces are denoted in boldface.

\begin{figure}[h]
\centering
\includegraphics[scale=0.5]{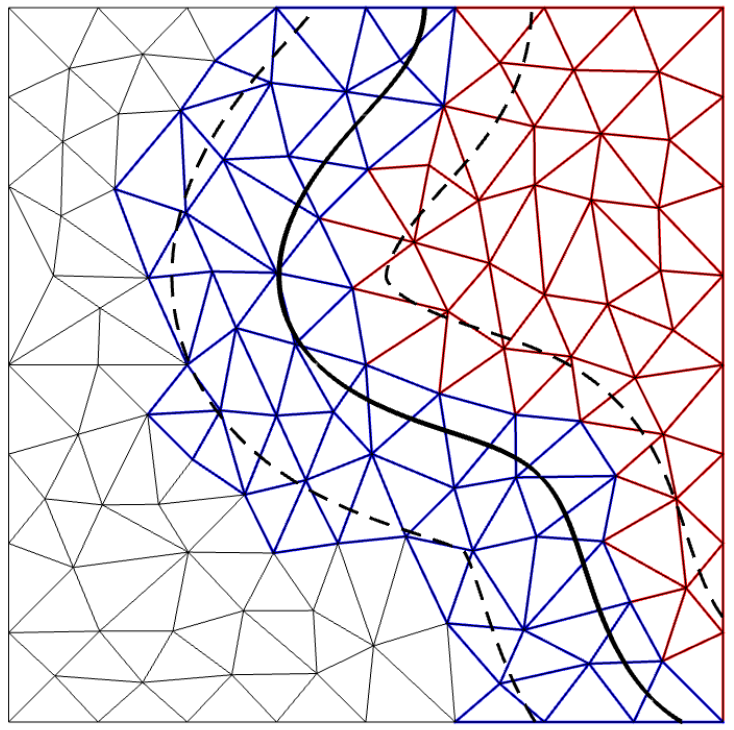}\qquad
\includegraphics[scale=0.5]{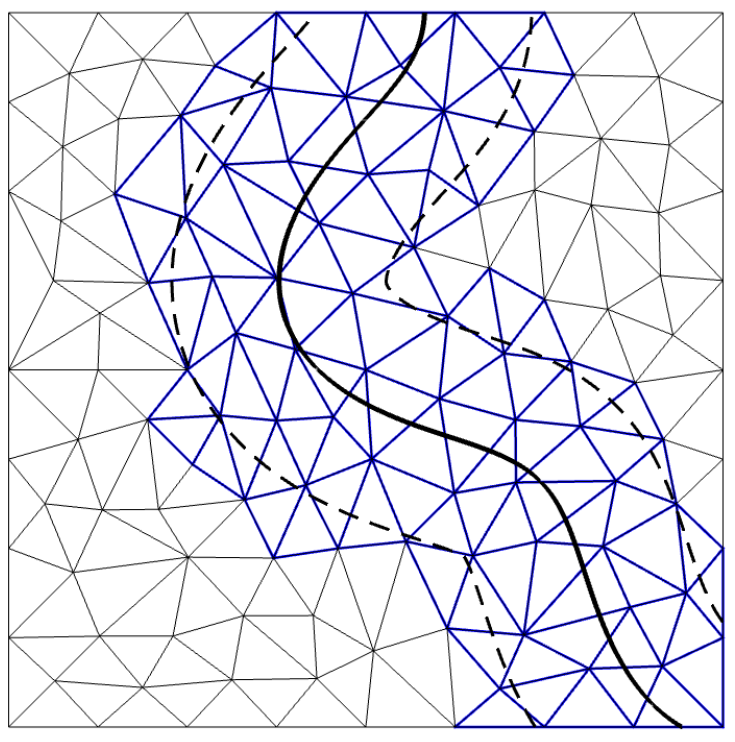}
\caption{\label{fig:Tri}Left: A depiction of the mesh
in two dimensions,
with the interior triangulation $\calT_{h,i}^n$
in red and $\calT_{h,e}^n\backslash \calT_{h,i}^n$ in blue.
Right: The triangulation $\calT_{\Gamma_h}^n$ (blue) around
a $\delta_h$-neighborhood tubular region of $\Gamma_h$.
}
\end{figure}

\subsection{Finite Element Spaces and Assumptions}
We denote by $\pol_m(\calT_h)$ the space 
of piecewise polynomials of degree $m$ with 
respect to $\calT_h$, and set $\pol_m^c(\calT_h) = \pol_m(\calT_h)\cap H^1(\whO)$
to be its subspace of continuous, piecewise polynomials. Analogous
vector-valued spaces are denoted in boldface.
We consider a Stokes finite element pair
$\bV_h\times Q_h\subset \bH^1(\whO)\times L^2(\whO)$,
consisting of piecewise polynomial spaces with respect to $\calT_h$,
and assume the following inclusions
\begin{align}\label{eqn:VhInclusions}
    \bpol_{\underline{m}_v}^c(\calT_h)&\subset \bV_h\subset \bpol_{\overline{m}_v}^c(\calT_h),
\end{align}
for some integers $1\le \underline{m}_v\le \overline{m}_v$. We further assume 
there exists $m_q\in \mathbb{N}_0$ such that 
\begin{align}\label{eqn:QhInclusions}
 Q_h = \pol_{m_q}(\calT_h)\quad \text{or}\quad  Q_h = \pol_{m_q}^c(\calT_h).
\end{align}

We set $\bV_h^n \subset \bH^1(\Omhe)$ to
be the restriction of $\bV_h$
to $\Omhe$, and let $Q_h^n$ 
be the restriction of $Q_h$ to $\Omhe$ with a zero-mean constraint 
on $\Omega_{h,i}^n$, i.e.,
\[
Q_h^n = \{q|_{\Omhe}:\ \exists\, q\in Q_h\text{ such that }\int_{\Omega_{h,i}^n}q\,dx=0 \}.
\]
Note that, by construction, $\Omh \subset \Omega_{h,e}^{n-1}$,
and therefore functions in $\bV_h^{n-1}\times Q_h^{n-1}$ are well defined on $\Omh$.

We define the Nitsche-type norms on $\bH^1(\Omh)\cap \bH^2(\The)\big|_{\Omh}$:
\[
\vnorm{\bv}^2:=\|\nab \bv\|_{\Omega_h^n}^2 + h^{-1} \|\bv\|_{\Gamma^n_h}^2 + h\|\nab \bv\|_{\Gamma_h^n}^2,\qquad
\]
and further define the norm 
for piecewise smooth functions on the extended domains:
\[
\vnorme{\bv}^2:= \|\nab \bv\|_{\Omhe}^2+ \tnorm{\bv}_n^2.\qquad
\]
Likewise, we define 
the weighted $H^1$-seminorm 
with respect to the interior mesh $\Thi$:
\[
\qnorm{q}^2:=\rev{h^2}\sum_{T\in \Thi} \|\nab q\|_T^2+\rev{h}\sum_{F\in \Fhi} \left\|\jump{q}\right\|_F^2,
\]
where $\jump{\cdot}$ denotes the jump operator across an interior face.
Note that $\qnorm{\cdot}$ is a norm on $Q_h^n|_{\Omega_{h,i}^n}$.
Similarly, 
we define weighted seminorm over the extended domain  $\Omhe$:
\[
\qnorme{q}^2:=\rev{h^2}\sum_{T\in \The} \|\nab q\|_T^2+\rev{h}\sum_{F\in \Fhe} \left\|\jump{q}\right\|_F^2.
\]
Note that $\qnorme{q}$ is a norm on $Q_h^n$, and \emph{it will be our main pressure norm for stability and error analysis.} 

In addition to the inclusions \eqref{eqn:VhInclusions}--\eqref{eqn:QhInclusions} ,
we make the following assumptions to ensure stability of the discretization presented below.
\begin{assumption}\label{ass:Pair}
Assume that, given $q\in Q_h^n$,
there exists $\bv_h\in \bV_h^n$ that satisfies
 \begin{subequations}
\label{eqn:DinfSupExtra}
     \begin{align}
     \label{eqn:DinfSupExtra1}
         \vnorme{\bv}&\lesssim \qnorm{q},\\
         \label{eqn:DinfSupExtra2}
         \qnorm{q}^2&\le b^n_h(\bv,q):=\int_{\Omh} ({\rm div}\,\bv)q\, dx -\int_{\Gamma_h^n} (\bv\cdot \bn) q\, ds,\\
         \label{eqn:DinfSupExtra3}
         \|\bv\|_{\Omh}&\lesssim h\qnorm{q}.
     \end{align}
 \end{subequations}
 \end{assumption}

\begin{remark}\label{rem:InfSup} \rm
The first two statements \eqref{eqn:DinfSupExtra1}--\eqref{eqn:DinfSupExtra2} are
assumptions related to discrete inf-sup stability, but where the $L^2$ norm
of the pressure function is replaced with the weighted $H^1$-norm.  A variation
of these conditions is shown to hold in the context of CutFEM
for many standard stable Stokes pairs in \citep{guzman2016inf}.
Using a Verf\"urth-type trick, it is shown in this reference that,
if \eqref{eqn:DinfSupExtra1}--\eqref{eqn:DinfSupExtra2} is satisfied
then the discrete inf-sup condition with $L^2$ pressure norm holds:
\[
\theta \|q\|_{\Omh} \le \sup_{\bv\in \bV_h^n} \frac{b_h^n(\bv,q)}{\vnorme{\bv}}+|q|_{J^n_h}\qquad \forall q\in {Q}_h^n,
\]
where $\theta>0$ is independent of $h$ and how $\Gamma_h^n$ cuts through the triangulation $\calT_h$,
 and $|\cdot|_{J_h^n}$ is given by \eqref{eqn:GhostsemiNorms} below.
We show below in Section \ref{sec:Examples} that the third condition \eqref{eqn:DinfSupExtra3} is satisfied for several canonical pairs as well.    
\end{remark}

\begin{remark}\label{rem:AssumeGeneral}\rm
Assumption \ref{ass:Pair} can be modified
and slightly weakened by replacing $\Omhi$
and $\Thi$ by a smaller domain and mesh, respectively,
provided the pressure ghost-penalty compensates for the smaller domain.
In particular, let $\tilde \calT_{h,i}^n\subset \Thi$ be a sub-mesh
with corresponding domain $\tilde \Omega_{h,i}^{n} = {\rm int}\left(\bigcup_{T\in \tilde \calT_{h,i}^n} \bar T\right)$.
Then if 
 \begin{equation}\label{eqn:qExtend}
 \|q\|_{\Omhe} \lesssim \|q\|_{\tilde \Omega_{h,i}^{n}} + |q|_{J^n_h}\qquad \forall q\in Q_h^n,
 \end{equation}
then we can replace $\calT_{h,i}^n$ by $\tilde \calT_{h,i}^n$ and $\Omega_{h,i}^{n}$ by $\tilde \Omega_{h,i}^{n}$
in Assumption \ref{ass:Pair}. This modified assumption is used in the case $\bV_h\times Q_h$ is the Taylor-Hood pair.
\end{remark}

\subsection{The CutFEM Discretization}
The finite element method based on the backward Euler temporal discretization seeks, at each time step, the pair $(\bu_h^n,p_h^n)\in \bV_h^n\times Q_h^n$
such that
\begin{equation}\label{eqn:FEM}
    \int_{\Omega_h^n} \Big(\frac{\bu_h^n-\bu_h^{n-1}}{\Delta t}\Big)\cdot \bv\,dx+ a^n_h(\bu_h^n,\bv) -b_h^n(\bv,p_h^n)
    + b_h^n(\bu_h^n,q) +\gamma_J J_h^n(p_h^n,q) = F^n(\bv,q),
\end{equation}
for all $\bv\in \bV_h^n,\, q\in Q_h^n$.
Here, $b_h^n(\cdot,\cdot)$ is given by \eqref{eqn:DinfSupExtra2},
and the bilinear form
$a_h^n(\cdot,\cdot)$ is defined as
\begin{align*}
\widehat a_h^n(\bu,\bv) & = \int_{\Omega_h^n} \nab \bu:\nab \bv\, dx + \int_{\Omega_h^n} (\bw \cdot \nab \bu)  \cdot \bv\,dx - \int_{\Gamma_h^n} \left([(\nab \bu) \bn]\cdot \bv+ [(\nab \bv) \bn]\cdot \bu - \frac{\eta}{h} \bu\cdot \bv\right)\, ds,\\
a_h^n(\bu,\bv) & =\widehat a_h^n(\bu,\bv)+\gamma_s s^n_h(\bu,\bv),
\end{align*}
where 
$\gamma_s,\gamma_J,\eta \ge 1$ are user-defined constants.
The bilinear forms
$s_h^n(\cdot,\cdot)$ and $J_h^n(\cdot,\cdot)$ consist 
of ghost-penalty terms
acting on $\bV^n_h\times \bV_h^n$ and $Q_h^n\times Q_h^n$, respectively, 
defined on an $O(\delta_h)$ neighborhood of $\Gamma_h^n$:
\begin{equation}\label{eqn:GPForms}
\begin{split}
s_h^n(\bu,\bv) 
&= \sum_{F\in \calF_{\Gamma_h}^n} \sum_{k=1}^{\overline{m}_v} h^{2k-1}\int_F \jump{\p^k_{\bn_F} \bu} \jump{\p^k_{\bn_F} \bv}\, ds,\\
J_h^n(p,q) &= \sum_{F\in \calF_{\Gamma_h}^n} \sum_{k=0}^{m_q} h^{2k+1} \int_F  \jump{\p^k_{\bn_F} p}\jump{\p^k_{\bn_F}q}\, ds,
\end{split}
\end{equation}
and $\p^k_{\bn_F}$ denotes the $k$th-order directional derivative 
with respect to the normal of the face $F$.
Here, $\overline{m}_v$ and $m_q$ are the integers in \eqref{eqn:VhInclusions}--\eqref{eqn:QhInclusions}.
Finally, $F^n(\bv,q)$ is a bounded linear functional on $\bV^n_h\times Q_h^n$ 
with 
\begin{equation}\label{eqn:Fnorm}
\|F^n\|_{*}:=\sup_{(\bv,q)\in \bV_h^n\times Q_h^n} \frac{F^n(\bv,q)}{(\vnorme{\bv}^2+ \qnorme{q}^2)^{\frac12}}<\infty.
\end{equation}
In \eqref{eqn:FEM} it is given by 
\[
F^n(\bv,q)=\int_{\Omega^n_h} {\bf f}^n\cdot \bv\, dx,
\]
but later we will consider a more general $F^n$ for the purpose of analysis.

\rev{
\begin{remark}\rm
We have assumed that $\bw$ is a (given) smooth solenoidal vector field defined on $\mathbb{R}^3\times [0,T]$.
 If this vector field is instead defined on $\cup_{t\in [0,T]} \Omega(t)\times \{t\}$, then a suitable extension to $\bw$ would be used in 
 the bilinear form $a_h^n(\cdot,\cdot)$.  
 Such an approximation may not be solenoidal, in which case
 a standard skew-symmetry of the convective term would be required
 in the finite element method.  
 The stability and convergence
 analysis results presented below still hold in this more general setting,
 albeit with slightly more technical arguments. We refer to \citep{gross2015trace}
 for details.
\end{remark}
}

\begin{remark}\rm
The ghost-penalty bilinear forms \eqref{eqn:GPForms}
both stabilize the solution of problem \eqref{eqn:FEM} due to irregular cuts
as well as yield implicit extensions to $\Omega_{h,e}^n$.
These terms also aid in algebraic stabilization, as the resulting
condition number of the system is insensitive to how $\Gamma_h^n$ intersects
$\calT_h$.
The pressure ghost-stabilization form $J_h^n(\cdot,\cdot)$
ensures numerical stability as it provides an inf-sup-type stability
condition of the pair $\bV_h^n\times Q_h^n$ (cf.~Remark \ref{rem:InfSup}).

There are now several types
of ghost-penalty stabilization besides the ``derivative jump version''
used in \eqref{eqn:GPForms}. These include the ``direct version'' \citep{preussmaster}
as well as the ``local projection stabilization version'' \citep{B10}.
In principle, we can replace \eqref{eqn:GPForms} with any choice of these
types of ghost penalty versions, and the stability and convergence analysis
presented below carries through with only superficial modifications.
However, for clarity of presentation, we only focus
on the derivative jump version in detail below.

Finally, we remark that the extension
of the discrete pressure approximation to all of 
 $\Omega_{h,e}^n$ is not required, in particular,
 the pressure ghost-penalty stabilization $J_h^n(\cdot,\cdot)$
 only needs to be defined on a single layer of elements around 
 $\Gamma_h^n$ to ensure stability. However, we use 
 the set of faces $\calF_{\Gamma_h}^n$ for both terms in 
 \eqref{eqn:GPForms} to simplify the presentation.
\end{remark}

\section{Stability and Convergence Analysis}\label{s:NA}
We denote by 
\begin{equation}\label{eqn:GhostsemiNorms}
|\bv|_{s_h^n} = \sqrt{s_h^n(\bv,\bv)}\quad\text{and}\quad
|q|_{J^n_h} = \sqrt{J_h^n(q,q)}
\end{equation}
the semi-norms induced by the bilinear forms 
$s_h^n(\cdot,\cdot)$
and $J_h^n(\cdot,\cdot)$, respectively.
We assume that the Nitsche penalty parameter $\eta$ is chosen sufficiently
large (but independent of $h$ and the mesh-interface cut) such that $a_h^n(\cdot,\cdot)$ is coercive on $\bV_h^n$ (cf.~\citep{burman2012fictitious}).
In particular, we assume $\eta>0$ is chosen such that
\begin{equation}\label{eqn:EtaCond}
a_h^n(\bv,\bv)\ge \frac12 \vnorm{\bv}^2 + \gamma_s |\bv|_{s_h^n}^2
\qquad \forall \bv\in \bV_h^n.
\end{equation}

Similar to \citep{lehrenfeld2019eulerian,von2022unfitted}, 
we assume that elements
in the strip $\The\backslash \calT_{h,i}^n$
can be reached from an uncut element in $\calT_{h,i}^n$
by a path that crosses at most $L$ faces with $L\lesssim (1+\frac{\delta_h}{h})$;
we refer to \citep{lehrenfeld2019eulerian,von2022unfitted} to see why this is
a reasonable assumption and how it relates to the shape-regularity of the 
triangulation $\calT_h$.
We consider the setting where $L$ is uniformly bounded with respect
to the discretization parameters, i.e., when $\delta_h\lesssim h$.
Recalling that $c_{\delta_h} \bw_\infty^n \Delta t\le \delta_h$
with $1\le c_{\delta_h} =O(1)$, this
brings us to the time-step restriction: 
\begin{equation}\label{cond1}
    \Delta t \lesssim h.
\end{equation}
The condition \eqref{cond1} and $\|\bw\|_{L^\infty(\Q)}\lesssim1$ implies
\begin{equation}\label{condL}
    L \lesssim 1.
\end{equation}
Thanks to \eqref{condL} and standard properties 
of the stabilization terms (see, e.g., \citep[Lemma 5.1]{massing2014stabilized}), 
we have the following norm equivalences for all $\bv\in \bV_h^n$ and $q\in Q_h^n$:
\begin{equation}\label{eqn:normEquivy}
\begin{split}
\|\bv\|_{\Omega_{h,e}^n}^2 &\simeq \|\bv\|_{\Omega_h^n}^2 + h^2  |\bv|_{s_h^n}^2,\\
\vnorme{\bv}^2 &\simeq  \vnorm{\bv}^2 +   |\bv|_{s_h^n}^2,  \\
\qnorme{q}^2 &\simeq  \qnorm{q}^2 + |q|_{J_h^n}^2,\\
\|q\|_{\Omhe}^2&\simeq \|q\|_{\Omega_{h,i}^n}^2 + |q|_{J_h^n}^2.
\end{split}
\end{equation}

\subsection{Preliminary Results}
In this section, we 
collect some preliminary results used in the stability
and the convergence analysis of the finite element method \eqref{eqn:FEM}.
\begin{lemma}
    For $h$ sufficiently small, there holds for all $\bv\in \bV_h^{n-1}$,
    \begin{equation}\label{eqn:ExtBound}
    \|\bv\|_{\Omega_h^n}^2\le \|\bv\|_{\Omega_{h,e}^{n-1}}^2\le (1+ c_1 \Delta t)\|\bv\|^2_{\Omega_h^{n-1}} + \frac{\Delta t}{4} \tnorm{\bv}_{n-1}^2 + \Delta t L |\bv|_{s_h^{n-1}}^2
    \end{equation}
    for a  constant $c_1>0$ independent of $h$, $\Delta t$
    and how the boundary cuts through the triangulation.
\end{lemma}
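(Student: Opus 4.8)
The plan is to bound the norm on $\Omega_h^n$ by the norm on the extended domain $\Omega_{h,e}^{n-1}$, and then transport the latter back to time $t_{n-1}$ using the flow of $\bw$ together with the geometric layer thickness $\delta_h$. The first inequality $\|\bv\|_{\Omega_h^n}^2 \le \|\bv\|_{\Omega_{h,e}^{n-1}}^2$ is immediate from the inclusion $\Omega_h^n \subset \Omega_{h,e}^{n-1}$, which holds because $\mathrm{dist}(\Omega_h^{n-1},\Omega_h^n)\le \delta_h$ and $\Omega_{h,e}^{n-1}$ is the union of all background simplices within $\delta_h$ of $\Omega_h^{n-1}$ (so in particular it contains $\Omega_h^n$).

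For the second inequality, the main work is to compare $\|\bv\|_{\Omega_{h,e}^{n-1}}^2$ with $\|\bv\|_{\Omega_h^{n-1}}^2$. I would split $\Omega_{h,e}^{n-1} = \Omega_h^{n-1} \cup \big(\Omega_{h,e}^{n-1}\setminus \Omega_h^{n-1}\big)$ and estimate the contribution of the strip $\Omega_{h,e}^{n-1}\setminus \Omega_h^{n-1}$, whose width is $O(\delta_h) = O(\Delta t)$ since $c_{\delta_h}\bw_\infty^{n-1}\Delta t \le \delta_h \lesssim h$ and $\|\bw\|_{L^\infty(\Q)}\lesssim 1$. On this strip I would use a tubular-neighborhood / co-area argument: the strip is covered by translating the boundary $\Gamma_h^{n-1}$ (and more precisely the cut elements near it) outward by a distance $O(\Delta t)$, so its measure, weighted appropriately, is controlled by $\Delta t$ times boundary-type quantities of $\bv$. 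Concretely, a one-dimensional trace-and-fundamental-theorem-of-calculus estimate along normal rays gives, for the part of the strip lying in cut elements, a bound of the form $\Delta t\big(h^{-1}\|\bv\|_{\Gamma_h^{n-1}}^2 + h\|\nabla\bv\|_{\Gamma_h^{n-1}}^2\big) = \Delta t\,\tnorm{\bv}_{n-1}^2$ up to the $\|\nabla\bv\|^2$ term already inside $\tnorm{\cdot}_{n-1}$; and for the part of the strip that lies outside all interior elements but is reached through at most $L$ faces, a ghost-penalty / discrete-extension estimate (as in \citep[Lemma 5.1]{massing2014stabilized} and the norm equivalences \eqref{eqn:normEquivy}) converts the strip norm into $\Delta t\, L\, |\bv|_{s_h^{n-1}}^2$ plus a multiple of $\|\bv\|_{\Omega_h^{n-1}}^2$. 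Collecting constants, one absorbs the multiple of $\|\bv\|_{\Omega_h^{n-1}}^2$ into the factor $(1+c_1\Delta t)$, and shrinks the constant in front of $\tnorm{\bv}_{n-1}^2$ down to $\tfrac14$ by choosing $h$ (hence $\Delta t$) sufficiently small — which is exactly why the lemma is stated for $h$ small.

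The main obstacle is making the strip estimate clean and $h$-robust: the strip $\Omega_{h,e}^{n-1}\setminus\Omega_h^{n-1}$ is a union of (pieces of) background simplices that can be cut by $\Gamma_h^{n-1}$ in arbitrarily bad ways, so one cannot directly integrate along normal rays on every such element. The resolution is to treat separately the cut elements adjacent to $\Gamma_h^{n-1}$ — where the boundary trace term of $\tnorm{\cdot}_{n-1}$ is available — and the further-out elements, which by the path assumption ($L\lesssim 1$ under \eqref{cond1}, cf.~\eqref{condL}) are linked to an uncut interior element across at most $L$ faces, so that the ghost-penalty semi-norm $|\bv|_{s_h^{n-1}}^2$ (which controls jumps of all derivatives up to order $\overline{m}_v$, hence the full polynomial) lets one compare $\|\bv\|^2$ on the outer strip element to $\|\bv\|^2$ on the interior element it is chained to, losing only the advertised factors. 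An alternative, perhaps slicker, route is to invoke directly the Hardy-type / strip-trace inequality already used in \citep{lehrenfeld2019eulerian,von2022unfitted} for exactly this extended-domain comparison and then account for the $\bw$-dependence of $\delta_h$ through $\bw_\infty^{n-1}\Delta t\le \delta_h$; I would follow whichever of these the paper's earlier references set up, since the constants $c_1$ and $L$ are precisely the bookkeeping quantities from that machinery.
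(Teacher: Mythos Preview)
Your outline is close to what the paper does, but the paper's argument is much shorter: it simply invokes \cite[Lemma~5.7]{lehrenfeld2019eulerian}, which already delivers the strip estimate in the parametrized form
\[
\|\bv\|_{\Omega_{h,e}^{n-1}}^2 \le (1+c_1(\epsilon)\Delta t)\|\bv\|_{\Omega_h^{n-1}}^2 + c_2(\epsilon)\Delta t\,\|\nabla\bv\|_{\Omega_h^{n-1}}^2 + c_3(\epsilon,h)\Delta t\,L\,|\bv|_{s_h^{n-1}}^2,
\]
with $c_2(\epsilon)=c'c_{\delta_h}\bw_\infty^n\epsilon$ and $c_3(\epsilon,h)=c_2(\epsilon)+h^2c'c_{\delta_h}\bw_\infty^n(1+\epsilon^{-1})$. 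One then chooses the free parameter $\epsilon$ so that $c_2(\epsilon)=\tfrac14$, uses $\|\nabla\bv\|_{\Omega_h^{n-1}}\le\tnorm{\bv}_{n-1}$, and finally takes $h$ small so that the $O(h^2)$ contribution to $c_3$ is at most $1$. Your strip decomposition plus ghost-penalty chaining is precisely the mechanism behind that cited lemma, so in substance you are re-deriving it rather than citing it; that is fine.

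There is, however, a genuine gap in your explanation of where the constant $\tfrac14$ comes from. You write that one ``shrinks the constant in front of $\tnorm{\bv}_{n-1}^2$ down to $\tfrac14$ by choosing $h$ (hence $\Delta t$) sufficiently small.'' This is not correct: once the $\Delta t$ is factored out, the coefficient in front of $\|\nabla\bv\|^2$ (or $\tnorm{\bv}_{n-1}^2$) produced by the strip/trace argument is an $O(1)$ quantity depending on $\bw_\infty$ and $c_{\delta_h}$, and it does not tend to zero with $h$. The device that makes it exactly $\tfrac14$ is a Young-inequality parameter $\epsilon$ applied to the cross term in the fundamental-theorem-of-calculus step (trading the gradient coefficient against the $L^2$ coefficient, which is why $c_1(\epsilon)\sim 1+\epsilon^{-1}$ blows up as $\epsilon\to 0$). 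The smallness of $h$ is used for a different purpose: after $\epsilon$ is fixed, the $O(h^2)$ term $c_4(\epsilon,h)$ that feeds into the ghost-penalty coefficient must be bounded by $1$, and that is the only place where ``$h$ sufficiently small'' enters.
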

\begin{proof}
From \citep[Lemma 5.7]{lehrenfeld2019eulerian}, we have
\begin{align*}
\|\bv\|_{\Omega_{h,e}^{n-1}}^2
&\le (1+c_1(\epsilon)\Delta t) \|\bv\|_{\Omega_h^{n-1}}^2 + c_2(\epsilon) \Delta t \|\nab\bv\|_{\Omega_h^{n-1}}^2 +  c_3(\epsilon,h) \Delta t
L |\bv|_{s_h^{n-1}}^2,
\end{align*}
with 
\begin{alignat*}{2}
&c_1(\epsilon) = c' c_{\delta_h} \bw_\infty^n(1+\epsilon^{-1}),\qquad 
&&c_2(\epsilon) = c' c_{\delta_h} \bw_\infty^n \epsilon,\\
&c_3(\epsilon,h) = c_2(\epsilon) + c_4(\epsilon,h),\qquad
&&c_4(\epsilon,h) = h^2 c' c_{\delta_h} \bw_\infty^n  (1+\epsilon^{-1}),
\end{alignat*}
$c'>0$ is a generic constant,
and $\epsilon>0$ is arbitrary.  
The result \eqref{eqn:ExtBound} follows from the inequality $\|\nab \bv\|_{\Omega_h^{n-1}}\le \tnorm{\bv}_{n-1}$
and by
taking $\epsilon$ such that  $c_2(\epsilon)= \frac14$ 
and $h$ sufficiently small such that $c_4(\epsilon,h) \le 1$.
\end{proof}

\begin{lemma}
    There holds the following discrete Poincare inequality
    \begin{equation}\label{eqn:DPoin}
\|\bv\|_{\Omega_h^n}\le c_P \tnorm{\bv}_{n}\qquad \forall \bv\in \bV_h^n.
    \end{equation}
\end{lemma}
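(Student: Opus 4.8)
The plan is to establish the discrete Poincar\'e inequality \eqref{eqn:DPoin} by comparing the $L^2$-mass of $\bv$ over $\Omega_h^n$ with a suitable norm, exploiting the homogeneous Dirichlet-type boundary term built into $\tnorm{\cdot}_n$. Since $\bV_h^n$ does \emph{not} consist of functions vanishing on $\Gamma_h^n$, the classical Poincar\'e--Friedrichs inequality does not apply directly; instead, the $h^{-1}\|\bv\|_{\Gamma_h^n}^2$ term in $\tnorm{\bv}_n^2$ must play the role of the boundary trace control. First I would recall that $\Omega_h^n$ is contained in the fixed bounded polyhedral domain $\whO$ (hence has diameter $\lesssim 1$), and extend $\bv$ trivially (or via a lift) so that a standard Poincar\'e-type estimate on a fixed domain can be invoked.

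The cleanest route I would take is a trace/Poincar\'e argument on the exact or approximate domain: for any $w\in H^1(\Omega_h^n)$ one has $\|w\|_{\Omega_h^n}^2 \lesssim \|\nabla w\|_{\Omega_h^n}^2 + \|w\|_{\Gamma_h^n}^2$, which is just the standard inequality controlling an $H^1$-function by its gradient plus its boundary trace on a bounded (uniformly Lipschitz) domain. Applying this componentwise to $\bv$ gives
\[
\|\bv\|_{\Omega_h^n}^2 \lesssim \|\nabla \bv\|_{\Omega_h^n}^2 + \|\bv\|_{\Gamma_h^n}^2 \le \|\nabla\bv\|_{\Omega_h^n}^2 + h^{-1}\|\bv\|_{\Gamma_h^n}^2 = \tnorm{\bv}_n^2,
\]
using $h\lesssim 1$ (so that $\|\bv\|_{\Gamma_h^n}^2 \le h^{-1}\|\bv\|_{\Gamma_h^n}^2$ up to a constant, or more simply $1\le h^{-1}$ for $h$ small), which is exactly \eqref{eqn:DPoin} with $c_P$ independent of $h$, $\Delta t$, and the cut configuration. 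One technical point to address is that the constant in the underlying $\|w\|^2\lesssim \|\nabla w\|^2 + \|w\|_{\partial\cdot}^2$ inequality must be uniform in $n$; this follows because $\Omega_h^n = \Psi_n^{-1}(\Omega^n)$ (or the isoparametric preimage) is a uniformly Lipschitz family of domains all sitting inside $\whO$, with Lipschitz constants controlled by \eqref{eqn:PhiIsGood} and the smoothness of the flow map $\Phi_t$ on the compact interval $[0,T]$, so the Poincar\'e/trace constants are bounded uniformly in $t_n$.

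The main obstacle — and the reason this deserves a lemma — is ensuring the uniformity of the constant with respect to how $\Gamma_h^n$ cuts through $\calT_h$: the domain $\Omega_h^n$ is \emph{not} mesh-fitted, so one cannot naively pass through a reference-element argument. I would handle this by working on the physical domain $\Omega_h^n$ itself (or its smooth image $\Omega^n$, transferring via the bi-Lipschitz map $\Psi_n$ with constants controlled by \eqref{eqn:PhiIsGood}), where the geometric Poincar\'e/trace inequality holds with a constant depending only on the fixed smooth family $\{\Omega(t)\}_{t\in[0,T]}$ and on $\whO$ — quantities genuinely independent of $h$ and of the cut. An alternative, if one prefers to stay entirely discrete, is to combine the discrete trace estimate from Appendix \ref{app:Dtrace} with a path argument connecting cut elements to interior uncut elements (as already invoked for \eqref{eqn:normEquivy}), but the geometric argument on $\Omega_h^n$ is more direct and I would present that.
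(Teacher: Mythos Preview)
Your argument is correct. The paper itself does not supply a proof: it simply cites \cite[Lemma~7.2]{massing2014stabilized} and moves on. Your route---the Friedrichs-type inequality $\|w\|_{\Omega_h^n}^2 \lesssim \|\nabla w\|_{\Omega_h^n}^2 + \|w\|_{\Gamma_h^n}^2$ on a bounded Lipschitz domain, combined with $h\lesssim 1$ to absorb the boundary term into $h^{-1}\|\bv\|_{\Gamma_h^n}^2$---is a clean self-contained argument. The only delicate point, which you correctly identify and handle, is the uniformity of the geometric constant in $n$ and in the cut configuration; pulling back to $\Omega^n$ via the bi-Lipschitz map $\Psi_n$ (with constants controlled by \eqref{eqn:PhiIsGood}) and then using the smoothness of the flow $\Phi_t$ on $[0,T]$ is exactly the right way to secure this. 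Compared with the paper's bare citation, your write-up has the advantage of making transparent \emph{why} the constant is cut-independent, which is the nontrivial content of the lemma in the unfitted setting.
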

\begin{proof}
See \citep[Lemma 7.2]{massing2014stabilized}.
\end{proof}

The following continuity estimate for the 
bilinear form $a^n_h(\cdot,\cdot)$ is essentially
given in \citep{burman2012fictitious} (also see \citep{massing2014stabilized,von2022unfitted}) and follows
from the Cauchy-Schwarz inequality, so the proof is omitted.
\begin{lemma}
There holds
\begin{equation}\label{eqn:anCont}
\begin{aligned}
a^n_h(\bu,\bv)&\lesssim \tnorm{\bu}_{n}\tnorm{\bv}_{n}+ \gamma_s |\bu|_{s_h^n}|\bv|_{s_h^n}\quad &&\forall \bu,\bv\in \bH^{\overline{m}_v+1}(\calT_{h,e}^n)\cap \bH^1(\Omega_{h,e}^n).
\end{aligned}
    \end{equation}
\end{lemma}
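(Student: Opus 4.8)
The plan is to bound each of the six terms that make up $a_h^n(\bu,\bv)$ separately by the Cauchy--Schwarz inequality, inserting the powers of $h$ dictated by the Nitsche norm $\tnorm{\cdot}_n$ (recall $\tnorm{\bv}_n^2=\|\nab\bv\|_{\Omega_h^n}^2+h^{-1}\|\bv\|_{\Gamma_h^n}^2+h\|\nab\bv\|_{\Gamma_h^n}^2$), and then to sum the contributions. All the boundary integrals and the ghost-penalty term are well defined because the assumed regularity $\bu,\bv\in\bH^{\overline{m}_v+1}(\The)\cap\bH^1(\Omega_{h,e}^n)$ guarantees $L^2$ traces of $\nab\bu,\nab\bv$ on $\Gamma_h^n$ and of the normal derivatives $\p^k_{\bn_F}\bu,\p^k_{\bn_F}\bv$ for $k\le\overline{m}_v$ on the interior faces. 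Every term except the convective one is immediate; the convective term is the only place where more than Cauchy--Schwarz is needed.

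For the viscous term, $\int_{\Omega_h^n}\nab\bu:\nab\bv\,dx\le\|\nab\bu\|_{\Omega_h^n}\|\nab\bv\|_{\Omega_h^n}\le\tnorm{\bu}_n\tnorm{\bv}_n$. For the two Nitsche consistency terms, Cauchy--Schwarz on $\Gamma_h^n$ followed by the factorization $\int_{\Gamma_h^n}[(\nab\bu)\bn]\cdot\bv\,ds\le\bigl(h^{1/2}\|\nab\bu\|_{\Gamma_h^n}\bigr)\bigl(h^{-1/2}\|\bv\|_{\Gamma_h^n}\bigr)\le\tnorm{\bu}_n\tnorm{\bv}_n$, and symmetrically for the term with $\bu$ and $\bv$ exchanged. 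For the Nitsche penalty term, $\frac{\eta}{h}\int_{\Gamma_h^n}\bu\cdot\bv\,ds\le\eta\bigl(h^{-1/2}\|\bu\|_{\Gamma_h^n}\bigr)\bigl(h^{-1/2}\|\bv\|_{\Gamma_h^n}\bigr)\le\eta\,\tnorm{\bu}_n\tnorm{\bv}_n$, with $\eta$ a fixed $O(1)$ constant. The ghost-penalty term is handled by a discrete Cauchy--Schwarz over faces and derivative orders, giving exactly $\gamma_s s_h^n(\bu,\bv)\le\gamma_s|\bu|_{s_h^n}|\bv|_{s_h^n}$, which is retained explicitly on the right-hand side.

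For the convective term, write $\bigl|\int_{\Omega_h^n}(\bw\cdot\nab\bu)\cdot\bv\,dx\bigr|\le\|\bw\|_{L^\infty(\Omega_h^n)}\|\nab\bu\|_{\Omega_h^n}\|\bv\|_{\Omega_h^n}$. Since $\bw$ is smooth on $\whO\times[0,T]$ we have $\|\bw\|_{L^\infty(\Omega_h^n)}\lesssim\|\bw\|_{L^\infty(\Q)}\lesssim1$, so it remains to absorb $\|\bv\|_{\Omega_h^n}$ into $\tnorm{\bv}_n$. This follows from a Poincar\'e--Friedrichs inequality with a boundary term, $\|\bv\|_{\Omega_h^n}\lesssim\|\nab\bv\|_{\Omega_h^n}+\|\bv\|_{\Gamma_h^n}\lesssim\|\nab\bv\|_{\Omega_h^n}+h^{-1/2}\|\bv\|_{\Gamma_h^n}\le\tnorm{\bv}_n$, valid on $\bH^1(\Omega_h^n)$; the constant is uniform in $h$ and $n$ because $\Omega_h^n$ is uniformly Lipschitz, being an $O(h^{q+1})$ perturbation of the smooth domain $\Omega^n$ (cf.~\eqref{eqn:PhiIsGood2}). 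Adding the six bounds yields \eqref{eqn:anCont}.

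The only genuinely non-routine point is this last Poincar\'e estimate for functions that need not be finite element functions nor vanish on $\Gamma_h^n$: the discrete Poincar\'e inequality \eqref{eqn:DPoin} only covers $\bV_h^n$, so one must instead invoke a continuous Poincar\'e inequality on the approximate domain $\Omega_h^n$ with a boundary contribution and verify that its constant does not degenerate as $h\to0$. Everything else is Cauchy--Schwarz bookkeeping, which is why the proof is omitted in the main text.
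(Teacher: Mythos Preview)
Your proof is correct and follows the same Cauchy--Schwarz approach the paper indicates (the paper omits the proof entirely, citing \cite{burman2012fictitious,massing2014stabilized,von2022unfitted}). Your observation that the convective term requires a Poincar\'e-type bound $\|\bv\|_{\Omega_h^n}\lesssim\tnorm{\bv}_n$ valid beyond $\bV_h^n$ is a genuine detail the paper glosses over, and your use of the continuous Poincar\'e--Friedrichs inequality with boundary term on the uniformly Lipschitz domains $\Omega_h^n$ is the right way to handle it for the function class stated.
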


The next result states
a discrete trace inequality
for discontinuous piecewise  polynomial functions.  Its proof 
is given in Appendix \ref{app:Dtrace} (also see \citep[Theorem 4.4]{BuffaOrtner09})).
\begin{lemma}\label{lem:Dtrace}
There holds 
\begin{equation}\label{eqn:Dtrace2}
\|q\|_{\Gamma_h^n}\lesssim h^{-1} \qnorme{q}\qquad \forall q\in Q_h^n.
\end{equation}

\end{lemma}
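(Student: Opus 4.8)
The goal is to bound $\|q\|_{\Gamma_h^n}$ by $h^{-1}\qnorme{q}$ for $q\in Q_h^n$, where $\qnorme{q}^2 = h^2\sum_{T\in\The}\|\nabla q\|_T^2 + h\sum_{F\in\Fhe}\|\jump{q}\|_F^2$. The first step is to localize: $\Gamma_h^n$ passes through a collection of cut elements, and within each such element $T$ the restriction $q|_T$ is a single polynomial, so a standard (single-element) discrete trace inequality gives $\|q\|_{\Gamma_h^n\cap T}^2\lesssim h_T^{-1}\|q\|_T^2$, hence $\|q\|_{\Gamma_h^n}^2\lesssim h^{-1}\sum_{T\in\The}\|q\|_T^2$ using quasi-uniformity. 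So it suffices to show the $L^2(\Omhe)$-type bound $\sum_{T\in\The}\|q\|_T^2\lesssim h^{-2}\qnorme{q}^2$, i.e. to control the full $L^2$ mass of $q$ on the active strip by its weighted broken $H^1$-norm. Note that $q$ has a zero-mean constraint only on $\Omhi$, and $\qnorme{\cdot}$ is a norm (not merely a seminorm) precisely because the jump terms pin down the piecewise-constant part.

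The core of the argument is a broken Poincaré / Poincaré–Wirtinger inequality adapted to the mesh-dependent weights. The plan is: (i) let $\bar q$ be the mean value of $q$ over $\Omhi$ (which is $0$ by the constraint, but keep it general for now); (ii) use a discrete Poincaré inequality for broken polynomial spaces — of Brenner type — that controls $\|q-\bar q\|_{\Omhe}$ by $\big(\sum_{T}\|\nabla q\|_T^2 + \sum_F h_F^{-1}\|\jump{q}\|_F^2\big)^{1/2}$ on a connected mesh; since $\Omhe$ is a fixed-diameter domain this is an $O(1)$ bound. Rearranging the weights: $h^2\sum_T\|\nabla q\|_T^2 = $ the first part of $\qnorme{q}^2$, while $h\sum_F\|\jump{q}\|_F^2$ versus $h^2\sum_F h_F^{-1}\|\jump{q}\|_F^2\simeq h\sum_F\|\jump{q}\|_F^2$ match up to the quasi-uniformity constant. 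Thus $\|q\|_{\Omhe}^2 = \|q-\bar q\|_{\Omhe}^2 \lesssim \sum_T\|\nabla q\|_T^2 + h^{-1}\sum_F\|\jump{q}\|_F^2 = h^{-2}\qnorme{q}^2$, which is exactly what is needed. Combining with step one closes the proof; the appendix presumably carries this out in detail, possibly citing \citep{BuffaOrtner09} for the broken trace/Poincaré machinery.

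The main obstacle is the broken Poincaré inequality with the correct handling of the zero-mean normalization: one must be careful that the mean is taken over $\Omhi$ (the interior sub-domain, where no element is cut) rather than over the full extended strip $\Omhe$, and that the faces $F\in\Fhe$ providing the jump-control form a connected graph linking every element of $\The$ back to $\Thi$ — this is where the bounded-path assumption ($L\lesssim 1$, i.e. $\delta_h\lesssim h$) and the quasi-uniformity of $\calT_h$ enter. A subtlety worth flagging: $Q_h$ may be either discontinuous ($\pol_{m_q}(\calT_h)$) or continuous ($\pol_{m_q}^c(\calT_h)$); in the continuous case the jump terms vanish and one instead needs a genuine Poincaré–Wirtinger inequality on $\Omhe$ with mean over $\Omhi$, plus a Poincaré-type inequality relating $\|q\|_{\Omhe}$ to $\|q\|_{\Omhi}$ which again uses $\delta_h\lesssim h$; both cases fold into the same estimate but must be addressed. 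The rest — the single-element discrete trace inequality and the weight bookkeeping via quasi-uniformity — is routine.
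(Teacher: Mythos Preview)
Your proposal is correct, and the overall structure---reduce the boundary norm to an $L^2(\Omega_{h,e}^n)$ estimate, then bound the latter via a (broken) Poincar\'e inequality exploiting the zero-mean constraint on $\Omega_{h,i}^n$---matches the paper's proof. The difference is in how the discontinuous-pressure case is handled at the trace step: you apply the cut-element discrete trace inequality $\|q\|_{\Gamma_h^n\cap T}^2\lesssim h_T^{-1}\|q\|_T^2$ directly to the polynomial $q$; the paper instead introduces an enriching (nodal-averaging) operator $E_h:Q_h^n\to Q_h^n\cap H^1(\Omega_{h,e}^n)$, splits $q=(q-E_hq)+E_hq$, controls $\|q-E_hq\|_{\Gamma_h^n}$ by the jump terms via the element-wise trace, and bounds $\|E_hq\|_{\Gamma_h^n}$ by the standard $H^1$ trace inequality. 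Your route is more direct and avoids the averaging machinery; the paper's route makes the reduction to the continuous case explicit. For the volume bound, the paper applies the discrete Poincar\'e--Friedrich inequality on $\Omega_{h,i}^n$ and then passes to $\Omega_{h,e}^n$ via the ghost-penalty norm equivalence $\|q\|_{\Omega_{h,e}^n}^2\simeq\|q\|_{\Omega_{h,i}^n}^2+|q|_{J_h^n}^2$; this is a cleaner way to handle the domain-uniformity issue you correctly flag when invoking a Brenner-type inequality directly on the $h$-dependent domain $\Omega_{h,e}^n$ with mean taken over the subdomain $\Omega_{h,i}^n$.
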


\subsection{Stability analysis}
In this section, we derive
stability results for the finite element method \eqref{eqn:FEM}.
First, we state the energy estimate for the finite element velocity 
in the following lemma. 
This result is essentially given in \citep[Theorem 5.9]{von2022unfitted}, but
we provide a proof for completeness.
\begin{lemma} \label{L1}
    There holds for $h$ sufficiently small,  any $\ep>0$ and $k=1,2,\dots$
    \begin{equation}\label{eqn:DVelStability}
    \begin{split}
    &\|\bu_h^k\|^2_{\Omega^k_h} + \sum_{n=1}^k \|\bu_h^n - \bu_h^{n-1}\|_{\Omega^n_h}^2+
    \Delta t \sum_{n=1}^k \left(\frac14 \vnorm{\bu_h^n}^2+(2\gamma_s-L-\frac12) |\bu_h^n|_{s_h^n}^2+2 \gamma_J |p_h^n|_{J_h^n}^2\right)\\
    &\hspace{0.75in}\le 
    {\rm exp}(ct_k)\left(\|\bu_h^0\|_{\Omega^0_h}^2+\frac{\Delta t}4 \tnorm{\bu^0_h}_{0}^2+ \Delta t L |\bu^0_h|_{s_h^0}^2 \right. \\ 
    & \hspace{1.75in}\left. +\Delta t (c_e+\ep^{-1}) \sum_{n=0}^k \|F^n\|_{*}^2 +\Delta t {\ep} 
    \sum_{n=0}^k\qnorme{p_h^n}^2\right).
    \end{split}
    \end{equation}
with constants $c$ and $c_e$ independent of the discretization parameters.
\end{lemma}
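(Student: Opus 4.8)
The plan is to test the finite element equation \eqref{eqn:FEM} at time step $n$ with the pair $(\bv,q) = (\Delta t\,\bu_h^n, \Delta t\,p_h^n)$, sum over $n=1,\dots,k$, and run a discrete Gr\"onwall argument. First I would use the elementary identity $(a-b)\cdot a = \frac12(|a|^2-|b|^2+|a-b|^2)$ applied to the $L^2(\Omega_h^n)$ inner product of the discrete time derivative against $\bu_h^n$; this produces the telescoping quantity $\tfrac12\big(\|\bu_h^n\|_{\Omega_h^n}^2 - \|\bu_h^{n-1}\|_{\Omega_h^n}^2\big) + \tfrac12\|\bu_h^n-\bu_h^{n-1}\|_{\Omega_h^n}^2$. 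The key point — and the reason the method is \emph{Eulerian} rather than a simple telescoping — is that $\|\bu_h^{n-1}\|_{\Omega_h^n}^2$ is the norm over the \emph{new} domain of a function defined on the old active mesh; here I invoke the extension estimate \eqref{eqn:ExtBound} to bound it by $(1+c_1\Delta t)\|\bu_h^{n-1}\|_{\Omega_h^{n-1}}^2 + \tfrac{\Delta t}{4}\tnorm{\bu_h^{n-1}}_{n-1}^2 + \Delta t L\,|\bu_h^{n-1}|_{s_h^{n-1}}^2$. The factor $(1+c_1\Delta t)$ is exactly what feeds the $\exp(ct_k)$ on the right-hand side after Gr\"onwall.

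Next I would handle the remaining terms. The diagonal term $\Delta t\,a_h^n(\bu_h^n,\bu_h^n)$ is bounded below by $\Delta t\big(\tfrac12\vnorm{\bu_h^n}^2 + \gamma_s|\bu_h^n|_{s_h^n}^2\big)$ via the coercivity assumption \eqref{eqn:EtaCond}; the cross terms $-\Delta t\,b_h^n(\bu_h^n,p_h^n) + \Delta t\,b_h^n(\bu_h^n,p_h^n)$ cancel identically because we test with the solution pair, leaving only $\Delta t\,\gamma_J J_h^n(p_h^n,p_h^n) = \Delta t\,\gamma_J|p_h^n|_{J_h^n}^2$ on the left. The load functional contributes $\Delta t\,F^n(\bu_h^n,p_h^n)$, which I bound using \eqref{eqn:Fnorm} by $\Delta t\,\|F^n\|_*(\vnorme{\bu_h^n}^2+\qnorme{p_h^n}^2)^{1/2}$ and then Young's inequality: the $\qnorme{p_h^n}^2$ part must be split off with a small parameter $\ep$ (this is the source of the $\Delta t\,\ep\sum\qnorme{p_h^n}^2$ term on the right, which cannot be absorbed on the left since the left side only controls $|p_h^n|_{J_h^n}$, not the full $\qnorme{\cdot}$), while the $\vnorme{\bu_h^n}^2\simeq\vnorm{\bu_h^n}^2+|\bu_h^n|_{s_h^n}^2$ part (using \eqref{eqn:normEquivy}) can be partially absorbed into the coercivity term with room to spare, contributing the $c_e+\ep^{-1}$ factor on $\|F^n\|_*^2$. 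Summing, the telescoping of $\|\bu_h^n\|_{\Omega_h^n}^2$ leaves only the initial datum $\|\bu_h^0\|_{\Omega_h^0}^2$, and the $\tfrac{\Delta t}{4}\tnorm{\bu_h^{n-1}}_{n-1}^2$ and $\Delta t L|\bu_h^{n-1}|_{s_h^{n-1}}^2$ terms from \eqref{eqn:ExtBound}, when re-indexed, partially cancel against the coercivity contributions — this is why the velocity dissipation appears with the shaved constant $\tfrac14$ and the stabilization with $(2\gamma_s-L-\tfrac12)$, and why an extra copy of $\tfrac{\Delta t}{4}\tnorm{\bu_h^0}_0^2 + \Delta t L|\bu_h^0|_{s_h^0}^2$ surfaces on the right.

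Finally I would close with the discrete Gr\"onwall lemma: after the above manipulations the inequality has the shape $E^k + (\text{good terms}) \le E^0 + c\,\Delta t\sum_{n=1}^{k-1}E^n + (\text{data})$, where $E^n$ collects $\|\bu_h^n\|_{\Omega_h^n}^2$ together with the fraction of $\vnorm{\bu_h^n}^2$ and $|\bu_h^n|_{s_h^n}^2$ needed to dominate the lag terms; Gr\"onwall then yields the $\exp(ct_k)$ prefactor on everything on the right. The main obstacle is bookkeeping rather than conceptual: one must carefully track the mismatch between the dissipation produced at step $n$ by coercivity and the norm $\tnorm{\cdot}_{n-1}$ and stabilization seminorm $|\cdot|_{s_h^{n-1}}$ that reappear from the extension estimate \eqref{eqn:ExtBound} at the \emph{previous} index, ensuring that after re-indexing the net coefficients are genuinely positive (hence the hypotheses $\gamma_s\ge1$, $L\lesssim1$, and the use of \eqref{eqn:normEquivy} to pass between $\tnorm{\cdot}_{n-1}$, $\vnorm{\cdot}$ and their extended counterparts). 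The Poincar\'e inequality \eqref{eqn:DPoin} is not strictly needed here but would be used if one wanted to also bound $\Delta t\sum\|\bu_h^n\|_{\Omega_h^n}^2$ on the right by the velocity-gradient terms; I would keep it in reserve for the Gr\"onwall step where the lag term $(1+c_1\Delta t)\|\bu_h^{n-1}\|_{\Omega_h^{n-1}}^2$ needs to be matched.
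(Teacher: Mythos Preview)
Your proposal is correct and follows essentially the same route as the paper: test \eqref{eqn:FEM} with $(\bu_h^n,p_h^n)$, use the polarization identity and coercivity \eqref{eqn:EtaCond}, bound $F^n(\bu_h^n,p_h^n)$ via \eqref{eqn:Fnorm}, \eqref{eqn:normEquivy} and Young's inequality with the free parameter $\ep$ on the pressure part, then apply the extension estimate \eqref{eqn:ExtBound}, sum, and close with discrete Gr\"onwall. Your accounting for the shaved constants $\tfrac14$ and $2\gamma_s-L-\tfrac12$ and for why the $\ep\,\qnorme{p_h^n}^2$ term must remain on the right is exactly the mechanism used in the paper.
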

\begin{proof}
Taking \rev{$\bv = \bu_h^{n}$} and $q = p_h^n$
in \eqref{eqn:FEM}, adding the two statements, applying \eqref{eqn:EtaCond},
and using the algebraic identity $(a-b)a = \frac12(a^2-b^2) + \frac12(a-b)^2$ yields
\[
\frac12 \|\bu_h^n\|_{\Omega_h^n}^2 - \frac12 \|\bu_h^{n-1}\|_{\Omega_h^n}^2+ \frac12 \|\bu_h^n -\bu_h^{n-1}\|_{\Omega_h^n}^2
+ \Delta t\left(\frac12\vnorm{\bu_h^n}^2 + \gamma_s |\bu_h^{n}|_{s_h^n}^2+\gamma_J |p_h^n|_{J^n_h}^2\right) \le \Delta t F^n(\bu_h^n,p_h^n)
\]
Using \eqref{eqn:Fnorm}
and the Cauchy-Schwarz inequality we estimate the right-hand side as 
follows
\[
\begin{split}
 F^n(\bu_h^n,p_h^n)&\le\|F^n\|_{*}(\vnorme{\bu_h^n}^2+\qnorme{p_h^n}^2)^\frac{1}{2}
 \le  \|F^n\|_{*}(\vnorme{\bu_h^n}+\qnorme{p_h^n})\\
 &\le \sqrt{c_e/2}\|F^n\|_{*}(\vnorm{\bu_h^n}^2+|\bu_h^n|_{s_h^n}^2)^{\frac12}+\|F^n\|_{*}\qnorme{p_h^n}\\
& \le
 \frac12 \Big(c_e+\epsilon^{-1}\Big)\|F^n\|_{*}^2+  \frac14(\vnorm{\bu_h^n}^2+|\bu_h^n|_{s_h^n}^2)+{\frac\ep2}\qnorme{p_h^n}^2,
 \end{split}
\]
where $c_e\ge 1$ satisfies $\vnorme{\bu_h^n}^2\le \frac{c_e}2 (\vnorm{\bu_h^n}^2+|\bu_h^n|_{s_h^n}^2)$ (cf.~\eqref{eqn:normEquivy}).
This yields
\begin{multline}\label{eqn:EnergyA}
 \|\bu_h^n\|_{\Omega_h^n}^2 -  \|\bu_h^{n-1}\|_{\Omega_h^n}^2+  \|\bu_h^n -\bu_h^{n-1}\|_{\Omega_h^n}^2
+ {\Delta t} \left(\frac12 \tnorm{\bu_h^n}_{n}^2 +(2 \gamma_s-\frac12) |\bu_h^n|_{s_h^n}^2+2 \gamma_J |p_h^n|_{J^n_h}^2\right)\\ \le  {\Delta t}\Big(\big(c_e+\ep^{-1}\big)\|F^n\|_{*}^2+ \ep\qnorme{p_h^n}^2\Big).
\end{multline}
Applying the estimate \eqref{eqn:ExtBound} (with $\bv = \bu_h^{n-1}$)
into \eqref{eqn:EnergyA} and summing the result
 over $n=1,\ldots,k$ yields
\begin{multline*}
\|\bu_h^k\|_{\Omega_h^k}^2 + \sum_{n=1}^k \|\bu_h^n - \bu_h^{n-1}\|_{\Omega_h^n}^2
+ {\Delta t} \sum_{n=1}^k \left(\frac14 \vnorm{\bu_h^n}^2  + (2\gamma_s-L-\frac12)|\bu^n_h|_{s_h^n}^2 + 2 \gamma_J |p_h^n|_{J_h^n}^2\right)\\
\le \|\bu_h^0\|_{\Omega_h^0} + \frac{\Delta t}{4}\tnorm{\bu_h^0}_{0}^2+ \Delta t L |\bu_h^{0}|_{s_h^0}^2+c_1 \Delta t \sum_{n=0}^{k-1} \|\bu_h^n\|_{\Omega_h^n}^2\\
+ \Delta t \sum_{n=1}^k\Big( \big(c_e+\ep^{-1}\big) \|F^n\|_{*}^2+ \ep\qnorme{p_h^n}^2\Big).
\end{multline*}
The estimate \eqref{eqn:DVelStability} now follows from a discrete Gronwall inequality.
\end{proof}


For the complete stability result we need to estimate the pressure term on the right-hand side of \eqref{eqn:DVelStability}. The estimate is given in the next lemma.
\begin{lemma} \label{L2}
    Assume $h^2\lesssim \Delta t$. Then
    \[
    \Delta t \sum_{n=1}^k \qnorme{p_h^n}^2\lesssim {\rm exp}(ct_k)\left(\|\bu_h^0\|_{\Omega^0_h}^2+{\Delta t}(\tnorm{\bu^0_h}_{0}^2+  |\bu^0_h|_{s_h^0}^2) +\Delta t \sum_{n=0}^k \|F^n\|_{*}^2\right).
    \]
\end{lemma}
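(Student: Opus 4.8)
The plan is to derive a discrete inf-sup-type bound for the pressure by testing the finite element equation \eqref{eqn:FEM} at time step $n$ with the pair $(\bv_h, q)$, where $\bv_h \in \bV_h^n$ is the special velocity field associated with $q = p_h^n$ furnished by Assumption \ref{ass:Pair}. First I would take $\bv = \bv_h$ and $q = 0$ in \eqref{eqn:FEM} to isolate the term $b_h^n(\bv_h, p_h^n)$, which by \eqref{eqn:DinfSupExtra2} controls $\qnorm{p_h^n}^2$ from below. Rearranging gives
\[
\qnorm{p_h^n}^2 \le b_h^n(\bv_h, p_h^n) = -\int_{\Omega_h^n}\Big(\frac{\bu_h^n - \bu_h^{n-1}}{\Delta t}\Big)\cdot \bv_h\,dx - a_h^n(\bu_h^n, \bv_h) + F^n(\bv_h, 0).
\]
Each term on the right must be bounded by the norms $\vnorme{\bv_h}$, $\|\bv_h\|_{\Omh}$, and $|\bv_h|_{s_h^n}$, which by \eqref{eqn:DinfSupExtra1} and \eqref{eqn:DinfSupExtra3} (together with the norm equivalences \eqref{eqn:normEquivy}) are all controlled by $\qnorm{p_h^n} \le \qnorme{p_h^n}$. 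The viscous/convective term is handled by continuity \eqref{eqn:anCont}, the forcing term by \eqref{eqn:Fnorm}, and after dividing through by $\qnorm{p_h^n}$ one obtains a pointwise-in-time bound of the form
\[
\qnorm{p_h^n} \lesssim \Big\|\frac{\bu_h^n - \bu_h^{n-1}}{\Delta t}\Big\|_{\Omh}\cdot h + \vnorme{\bu_h^n} + \|F^n\|_* .
\]
I would then upgrade $\qnorm{\cdot}$ to $\qnorme{\cdot}$ using the third line of \eqref{eqn:normEquivy}, which requires separately bounding $|p_h^n|_{J_h^n}$; but this is already controlled by the left-hand side of the velocity stability estimate \eqref{eqn:DVelStability} (the $\Delta t\sum 2\gamma_J |p_h^n|_{J_h^n}^2$ term), so squaring, multiplying by $\Delta t$, and summing over $n=1,\dots,k$ is the natural move.

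The crux is the term $h^2 \sum_n \Delta t \big\|(\bu_h^n - \bu_h^{n-1})/\Delta t\big\|_{\Omh}^2 = h^2 \Delta t^{-1}\sum_n \|\bu_h^n - \bu_h^{n-1}\|_{\Omh}^2$. Here is exactly where the hypothesis $h^2 \lesssim \Delta t$ enters: it converts this into $\lesssim \sum_n \|\bu_h^n - \bu_h^{n-1}\|_{\Omh}^2$, which is precisely one of the quantities bounded on the left-hand side of \eqref{eqn:DVelStability}. This is the main obstacle and the whole reason the scaled pressure norm $\qnorme{\cdot}$ (with its $h^2$ and $h$ weights) is used: the factor $h$ multiplying $\|(\bu_h^n-\bu_h^{n-1})/\Delta t\|_{\Omh}$ in the inf-sup bound comes from \eqref{eqn:DinfSupExtra3}, and without it the discrete time derivative could not be absorbed.

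Finally I would feed the resulting bound
\[
\Delta t \sum_{n=1}^k \qnorme{p_h^n}^2 \lesssim \sum_{n=1}^k \|\bu_h^n - \bu_h^{n-1}\|_{\Omh}^2 + \Delta t\sum_{n=1}^k\big(\vnorm{\bu_h^n}^2 + |\bu_h^n|_{s_h^n}^2\big) + \Delta t\sum_{n=1}^k \|F^n\|_*^2 + \Delta t\sum_{n=1}^k |p_h^n|_{J_h^n}^2
\]
into Lemma \ref{L1}: every term on the right except the last is already dominated by $\mathrm{exp}(ct_k)$ times the desired data quantities (choosing $\gamma_s$ large enough that $2\gamma_s - L - \tfrac12 \ge 0$, and taking $\ep$ in \eqref{eqn:DVelStability} small but fixed so the $\Delta t\,\ep\sum \qnorme{p_h^n}^2$ term on the right of \eqref{eqn:DVelStability} can be absorbed into the left-hand side of the present estimate after combining the two inequalities). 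The $|p_h^n|_{J_h^n}$ contribution is likewise controlled by the velocity stability estimate. A routine bookkeeping of constants and one more application of the discrete Gronwall inequality (or simply absorbing, since $t_k \le T$) then yields the claimed bound. The only genuinely delicate points are the $h^2 \lesssim \Delta t$ absorption described above and the circular-looking dependence on $\qnorme{p_h^n}$, which is resolved by the $\ep$-bookkeeping between the two lemmas.
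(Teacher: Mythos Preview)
Your proposal is correct and follows essentially the same argument as the paper's proof: test \eqref{eqn:FEM} with the velocity $\bv$ from Assumption~\ref{ass:Pair} (and $q=0$), use \eqref{eqn:DinfSupExtra3} to gain the factor $h$ on the discrete time derivative, square and sum, invoke $h^2\lesssim\Delta t$, upgrade $\qnorm{\cdot}$ to $\qnorme{\cdot}$ via \eqref{eqn:normEquivy}, and close by applying Lemma~\ref{L1} with $\epsilon$ small enough to absorb the residual pressure term. The only slip is a sign error in your displayed expression for $b_h^n(\bv_h,p_h^n)$ (the right-hand side should read $+\int(\ldots)+a_h^n(\ldots)-F^n(\ldots)$), but this is immaterial since you bound everything in absolute value.
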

\begin{proof}
Let $\bv\in \bV_h^n$ satisfy \eqref{eqn:DinfSupExtra} with $q=p_h^n$.
Then using the identity \eqref{eqn:FEM} and  bounds in \eqref{eqn:DinfSupExtra}, \eqref{eqn:Fnorm}, \eqref{eqn:anCont}, 
 and \eqref{eqn:normEquivy}, we have
\begin{align*}
    \qnorm{p_h^n}^2
    &\le b^n_h(\bv,p_h^n)\\
    & = \int_{\Omega^n_h} \frac{\bu_h^n-\bu_h^{n-1}}{\Delta t} \cdot \bv\, dx + a_h^n (\bu^n_h,\bv) - F^n(\bv,0)\\ 
    &\lesssim  \left( \frac1{\Delta t} \|\bu_h^n -\bu_h^{n-1}\|_{\Omega_h^n} \|\bv\|_{\Omega_h^n} + \vnorm{\bu_h^n} \vnorm{\bv} + \gamma_s |\bu_h^n|_{s_h^n} |\bv|_{s_h^n}
    +\|F^n\|_{*} \vnorme{\bv}\right)\\
    &\lesssim \left(\frac{h}{\Delta t} \|\bu_h^n - \bu_h^{n-1}\|_{\Omega^n_h} +\vnorm{\bu_h^n}+\gamma_s \rev{|\bu_h^n|_{s_h^n}} +\|F^n\|_{*} \right)\qnorm{p_h^n}. 
\end{align*}
Thus, we have
\begin{align}\label{eqn:L2Start}
\Delta t  \qnorm{p_h^n}^2& \lesssim \frac{h^2}{\Delta t} \|\bu_h^n - \bu_h^{n-1}\|^2_{\Omega^n_h} +\Delta t (\vnorm{\bu_h^n}^2+ |\bu_h^n|_{s_h^n}^2+\|F^n\|_{*}^2).
\end{align}
Combining this with  \eqref{eqn:normEquivy} leads to the estimate of the pressure norm in the extended domain: 
\begin{align*}
\Delta t  \qnorme{p_h^n}^2& \lesssim \frac{h^2}{\Delta t} \|\bu_h^n - \bu_h^{n-1}\|^2_{\Omega^n_h} +\Delta t (|p_h^n|_{J_h^n}^2+\tnorm{\bu_h^n}_{n}^2+ |\bu_h^n|_{s_h^n}^2+\|F^n\|_{\ast}^2).
\end{align*}
Summing  inequality over $n=1,\ldots,k$, and using $h^2\lesssim \Delta t$ and \eqref{eqn:DVelStability} gets
\begin{align*}
\Delta t \sum_{n=1}^k \qnorme{p_h^n}^2
&\lesssim \sum_{n=1}^k \|\bu_h^n - \bu_h^{n-1}\|^2_{\Omega^n_h} +\Delta t \sum_{n=1}^k (|p_h^n|_{J_h^n}^2+\vnorm{\bu^n_h}^2+|\bu_h^n|_{s_h^n}^2)+\Delta t \sum_{n=1}^k \|F^n\|_{*}^2.
\end{align*}
All terms on the right-hand side of the last inequality are estimated in \eqref{eqn:DVelStability}. 
Thus, by  applying \eqref{eqn:DVelStability} with  $\ep$ sufficiently small but independent of the discretization parameters  proves the lemma.
\end{proof}

\begin{remark}\rm
The corresponding BDF2 scheme is analogous to \eqref{eqn:FEM}, but where the
discrete time derivative is replaced by $\frac{3\bu^{n+1}_h-4\bu_h^{n-1}+\bu_h^{n-2}}{2\Delta t}$,
and the computational mesh is enlarged. In particular, $\delta_h$ is replaced by $2\delta_h$
in the definition of $\calT_{h,e}^n$ so that functions in $\bV_h^{n-2}$ are well defined in $\Omega_h^n$.
In this setting, a stability result holds for the discrete velocity similar
to \eqref{eqn:DVelStability}, but where $\|\bu_h^n-\bu_h^{n-1}\|_{\Omega_h^n}^2$ is replaced
by $\|\bu_h^n-2\bu_h^{n-1}+\bu_h^{n-2}\|_{\Omega_h^n}^2$. The proof of this result
is similar to that of Lemma \ref{L1} but using a different polarization identity, and so we omit the details.

The stability of the discrete pressure solution in the BDF2 scheme
is more subtle and requires a different argument than
 Lemma \ref{L2}. Analogous to \eqref{eqn:L2Start}, there holds
\begin{align*}
\Delta t  \qnorm{p_h^n}^2& \lesssim \frac{h^2}{\Delta t} \|3\bu_h^n - 4\bu_h^{n-1}+\bu_h^{n-2}\|^2_{\Omega^n_h} +\Delta t (\vnorm{\bu_h^n}^2+ |\bu_h^n|_{s_h^n}^2+\|F^n\|_{*}^2),
\end{align*}
and therefore
\begin{align*}
\sum_{n=1}^k \Delta t  \qnorme{p_h^n}^2
& \lesssim \frac{h^2}{\Delta t} \sum_{n=0}^k \|\bu_h^n\|^2_{\Omega^n_h} 
+\Delta t \sum_{k=1}^n (\vnorm{\bu_h^n}^2+ |\bu_h^n|_{s_h^n}^2+\Delta t |p_h^n|_{J_h^n}^2)+ \Delta t \sum_{n=1}^k \|F^n\|_{*}^2\\
& \le  \frac{T h^2}{\Delta t^2} \max_{0\le n\le N} \|\bu_h^n\|^2_{\Omega^n_h} 
+\Delta t \sum_{k=1}^n (\vnorm{\bu_h^n}^2+ |\bu_h^n|_{s_h^n}^2+\Delta t |p_h^n|_{J_h^n}^2)+ \Delta t \sum_{n=1}^k \|F^n\|_{*}^2.
\end{align*}
Thus, for $h\lesssim \Delta t$, the terms in the right-hand side of this 
expression are uniformly bounded, hence obtaining a stability estimate
for the discrete pressure solution.  Note that when combined
with \eqref{cond1}, we have the relation $\Delta t \simeq h$ in 
the case of BDF2.
\end{remark}

\subsection{Consistency}
The consistency of the scheme \eqref{eqn:FEM}
largely follows  the arguments in \citep[Lemma 5.14]{von2022unfitted}.
First, we identify the  extensions of the smooth exact solution $\calE \bu$ and $\calE p$
with $\bu$ and $p$, respectively, 
both of which satisfy \eqref{u_bound_a}. 
Recall that for $\bu$, we consider the divergence-free extension from \eqref{e:extensiont_cont}.
We then set $\bbU^n = \bu^n - \bu_h^n$ and $\bbP^n = p^n-p_h^n$
to denote the errors at $t_n$.
\begin{lemma}\label{lem:consistency}
    There holds for all $(\bv,q)\in \bV^n_h\times Q_h^n$,
        \begin{align*}
        \int_{\Omega_h^n} \frac{\bbU^n-\bbU^{n-1}}{\Delta t} \cdot \bv\, dx +a_h^n(\bbU^n,\bv)-b_h^n(\bv,\bbP^n)+b_h^n(\bbU^n,q)+\gamma_J J_h^n(\bbP^n,q) = \mathfrak{C}^n_c(\bv,q),
    \end{align*}
    where the consistency error $\mathfrak{C}^n_c(\bv,q)$ satisfies
    \begin{align}\label{estCc}
       &|\mathfrak{C}^n_c(\bv,q)|
       \lesssim    h^{q} \|\bu^n\|_{H^{2}(\Omega^n)}\qnorme{q}\\
       &\nonumber\quad+(\Delta t + h^q + h^{m_1}+h^{m_2}) \left(\|{\bf f}^n\|_{H^1(\Omega^n)} 
       + \|\bu\|_{W^{3,5}(\calQ)} + \|p^n\|_{H^{m_2}(\Omega^n)}+ \|\bu^n\|_{H^{m_1+1}(\Omega^n)}\right)\vnorme{\bv},
    \end{align}
for any integers $m_1,m_2$ satisfying $m_1\ge \overline{m}_v$ and $m_2\ge m_q+1$.
\end{lemma}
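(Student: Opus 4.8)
The plan is to derive the error equation by subtracting the exact-solution identity from the discrete scheme~\eqref{eqn:FEM}, and then to track each term that arises from the mismatch between the continuous problem~\eqref{eqn:Stokes} posed on $\Omega^n$ and the discrete forms posed on $\Omega_h^n$ and its extension. First I would recall that the smooth extension $(\calE\bu,\calE p)$ — identified with $(\bu,p)$ — is divergence-free in $\whO$ by construction~\eqref{e:extensiont_cont}, satisfies the regularity bounds~\eqref{u_bound_a}--\eqref{u_bound_b}, and solves~\eqref{eqn:Stokes} on $\Omega^n$ itself. Testing~\eqref{eqn:Stokes} at $t=t_n$ against $\bv\in\bV_h^n$, integrating by parts on $\Omega^n$, and using the Dirichlet condition $\bu=0$ on $\Gamma^n$ shows that $(\bu^n,p^n)$ satisfies a ``perturbed'' version of~\eqref{eqn:FEM} in which (i) the discrete time difference $(\bu^n-\bu^{n-1})/\Delta t$ replaces $\bu_t(t_n)$, producing a time-discretization remainder; (ii) all integrals are over $\Omega^n$ and $\Gamma^n$ rather than $\Omega_h^n$ and $\Gamma_h^n$, producing a geometric remainder; (iii) the ghost-penalty terms $\gamma_s s_h^n$ and $\gamma_J J_h^n$ are harmless because $\calE\bu,\calE p$ are globally smooth and the jump terms are $O(h^{q})$-type quantities one controls through the $W^{2,\infty}$ and $W^{4,5}$ bounds in~\eqref{u_bound_a}. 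Subtracting yields the stated error equation with $\mathfrak{C}^n_c(\bv,q)$ equal to the sum of these remainders plus the pressure contribution $b_h^n(\bu^n,q)$, which is \emph{not} zero since $\bu^n$ is only discretely, not exactly, divergence-free with respect to $\Omega_h^n$ and $\Gamma_h^n$.

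The main work is estimating the three families of remainders. For the time-discretization error I would write $\bu^n-\bu^{n-1}-\Delta t\,\bu_t(t_n)=-\int_{t_{n-1}}^{t_n}(t-t_{n-1})\bu_{tt}\,dt$ and bound it by $\Delta t\,\|\bu_{tt}\|$ on a space-time slab, absorbing $\|\bu_{tt}\|$ into $\|\bu\|_{W^{3,5}(\calQ)}$ via the embedding $W^{3,5}(\widehat\calQ)\subset W^{2,\infty}(\widehat\calQ)$ used in~\eqref{u_bound_a3}; pairing with $\bv$ over $\Omega_h^n$ and using the discrete Poincaré inequality~\eqref{eqn:DPoin} gives the $\Delta t$-factor on $\vnorme{\bv}$. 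For the geometric errors — differences of integrals over $\Omega^n$ versus $\Omega_h^n$ and over $\Gamma^n$ versus $\Gamma_h^n$ for the viscous, convective, Nitsche, source, and pressure-coupling terms — I would transport everything to $\Omega_h^n$, $\Gamma_h^n$ via the map $\Psi_n$ of~\eqref{eqn:PhiIsGood}, so that the Jacobian and normal mismatches are $O(h^{q})$ (and the symmetric-difference region has measure $O(h^{q+1})$ by~\eqref{eqn:PhiIsGood2}); combining with the $L^\infty$-type regularity of $\bu$, $p$, $\bf f$ and the trace inequality yields the factor $h^q$ multiplying the norms on the right, while the discrete trace estimate~\eqref{eqn:Dtrace2} is what converts the pressure boundary term $\int_{\Gamma_h^n}(\bv\cdot\bn)q\,ds$ into a bound by $h^q\|\bu^n\|_{H^2(\Omega^n)}\qnorme{q}$ — this is the first (pressure-tested) term in~\eqref{estCc}. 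Finally, the finite-element interpolation/approximation errors: replacing $(\bu^n,p^n)$ by themselves is exact, but the ghost-penalty jump terms $s_h^n(\bu^n,\bv)$, $J_h^n(p^n,q)$ vanish pointwise only for true polynomials, so one uses that for smooth functions $\jump{\p^k_{\bn_F}\bu^n}$ on an interior face is bounded by $h_F^{\overline{m}_v+1-k}$ times a higher Sobolev seminorm, giving the $h^{m_1}$ and $h^{m_2}$ contributions (here is where $m_1\ge\overline{m}_v$, $m_2\ge m_q+1$ enter).

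I expect the delicate step to be the pressure-coupling term: unlike in a fitted method, $b_h^n(\bu^n,q)=\int_{\Omega_h^n}(\Div\bu^n)q\,dx-\int_{\Gamma_h^n}(\bu^n\cdot\bn)q\,ds$ does not vanish, because $\bu^n$ is divergence-free on $\Omega^n$ (not on $\Omega_h^n$) and vanishes on $\Gamma^n$ (not on $\Gamma_h^n$). The volume piece $\int_{\Omega_h^n}(\Div\bu^n)q$ is supported only on the symmetric difference $\Omega_h^n\triangle\Omega^n$, of measure $O(h^{q+1})$, but $q$ is merely an $L^2$-function there, so a naive Cauchy--Schwarz loses a half-power; the fix is to integrate by parts back onto $\Gamma_h^n$ (using $\Div\bu^n=0$ on the bulk) so that everything reduces to a boundary integral $\int_{\Gamma_h^n}(\bu^n\cdot\bn_h)q\,ds$ with $\bu^n\cdot\bn_h = \bu^n\cdot\bn_h - (\bu^n\circ\Psi_n)\cdot\bn_\Gamma\circ\Psi_n$ of size $O(h^{q})$ pointwise (using $\bu^n|_{\Gamma^n}=0$, $\|\bu^n\|_{W^{1,\infty}}\lesssim\|\bu\|_{W^{3,5}(\calQ)}$, and the normal/position discrepancies from~\eqref{eqn:PhiIsGood}), and then applying the discrete trace inequality~\eqref{eqn:Dtrace2} to $q$ to land in $\qnorme{q}$. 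Getting the \emph{sharp} power $h^q$ here — rather than $h^{q-1/2}$ — is the crux and is precisely why the nonstandard weighted pressure norm $\qnorme{\cdot}$, together with Lemma~\ref{lem:Dtrace}, is built into the analysis.
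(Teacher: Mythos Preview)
Your overall strategy matches the paper's: derive the error identity by subtracting a weak formulation of \eqref{eqn:Stokes} (the paper tests against $\bv\circ\Psi_n^{-1}$ on $\Omega^n$, which is your ``transport via $\Psi_n$'' done in the other direction), then split into time, geometric, and stabilization remainders. Two points need correction, though neither is fatal.

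First, the ghost-penalty terms $s_h^n(\bu^n,\bv)$ and $J_h^n(p^n,q)$ actually \emph{vanish identically} for the smooth extensions: if $\bu^n\in \bH^{\overline{m}_v+1}(\Omega_{h,e}^n)$ then every derivative $\p_{\bn_F}^k\bu^n$ with $k\le\overline{m}_v$ has a well-defined trace, so $\jump{\p_{\bn_F}^k\bu^n}=0$ on each interior face, and similarly for $p^n$. The $h^{m_1}$ and $h^{m_2}$ contributions in \eqref{estCc} do \emph{not} come from the ghost terms; in the paper they arise inside the geometric consistency terms for the Nitsche form and for $b_h^n(\bv,p^n)$ (cf.\ the citations to \citep[Lemma~5.14]{von2022unfitted}).

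Second, your power counting on the pressure-tested boundary term is one order too pessimistic, and that order is exactly what you need. You correctly note that the divergence-free extension makes the volume part of $b_h^n(\bu^n,q)$ vanish (so there is no ``symmetric difference'' argument at all---your earlier remark that $\bu^n$ is divergence-free ``on $\Omega^n$ (not on $\Omega_h^n$)'' contradicts your own opening paragraph). What remains is $-\int_{\Gamma_h^n}(\bu^n\cdot\bn)q\,ds$. Since $\bu^n\circ\Psi_n=0$ on $\Gamma_h^n$, the normal discrepancy is irrelevant and one has $\|\bu^n\|_{\Gamma_h^n}=\|\bu^n-\bu^n\circ\Psi_n\|_{\Gamma_h^n}\lesssim h^{q+1}\|\bu^n\|_{H^2(\Omega^n)}$ (pointwise $O(h^{q+1})$ from $\|\Psi_n-\mathrm{id}\|_{L^\infty}\lesssim h^{q+1}$, not $O(h^q)$). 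Only with this extra power of $h$ does the discrete trace inequality \eqref{eqn:Dtrace2}, which costs $h^{-1}$, deliver the claimed $h^{q}\|\bu^n\|_{H^2(\Omega^n)}\qnorme{q}$; your stated $O(h^q)$ would leave you with $h^{q-1}\qnorme{q}$.
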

\begin{proof}
Recall that $\Psi_n:\calO_{\delta_h}(\Omega_h^n)\to \calO_{\delta_h}(\Omega_h)$
is the mapping that connects the approximate
and exact domains and satisfies \eqref{eqn:PhiIsGood}.
Testing \eqref{eqn:Stokes} with $\bv^\ell:=\bv\circ\Psi^{-1}_n$, $\bv\in\bV_h^n$, and $q^\ell:=q\circ\Psi^{-1}_n$, $q\in Q_h^n$, and integrating by parts we arrive at the identity
\begin{multline*}
\int_{\Omega^n} \frac{\partial \bu^n}{\partial t}\bv^\ell\,dx+ \int_{\Omega^n} \nabla\bu^n:\nabla\bv^\ell\,dx -
\int_{\Gamma^n} [(\nabla\bu^n)\bn]\cdot \bv^\ell\,ds
+\int_{\Omega^n} (\bw \cdot \nab \bu^n)  \cdot \bv^\ell)\,dx \\
-\int_{\Omega^n} p^n\Div\bv^\ell\,dx +\int_{\Gamma^n} p^n  (\bv^\ell\cdot \bn)\,ds
-\int_{\Omega^n} q^\ell\Div\bu^n\,dx= \int_{\Omega^n} {\bf f}^n\cdot \bv^\ell\,dx.
\end{multline*}
Subtracting this identity from \eqref{eqn:FEM} gives the  consistency term:
\begin{align*}
\mathfrak{C}^n_c(\bv,q)
&= \underbrace{\int_{\Omega^n} {\bf f}^n\cdot \bv^\ell\,dx -\int_{\Omega^n_h} {\bf f}^n\cdot \bv\,dx}_{=:\mathfrak{T}_1}
+\underbrace{\int_{\Omega_h^n} \frac{{\bu}^n-{\bu}^{n-1}}{\Delta t}\cdot \bv\,dx - \int_{\Omega^n} \frac{\partial \bu^n}{\partial t}\bv^\ell\,dx }_{=:\mathfrak{T}_2}\\
&
+ \underbrace{\widehat a_h^n(\bu^n,\bv) - \int_{\Omega^n} \nabla\bu^n:\nabla\bv^\ell\,dx +
\int_{\Gamma^n} [(\nabla\bu^n)\bn]\cdot \bv^\ell\,ds
-\int_{\Omega^n} (\bw \cdot \nab \bu^n)  \cdot \bv^\ell\,dx }_{=:\mathfrak{T}_3} \\
&
+ \underbrace{\int_{\Omega^n} p^n\Div\bv^\ell\,dx -\int_{\Gamma^n} p^n  (\bv^\ell\cdot \bn)\,ds - b_h^n(\bv,p^n)}_{=:\mathfrak{T}_4}\\
&-\underbrace{b_h^n(\bu^n,q)}_{=:\mathfrak{T}_{5}}
+\underbrace{\gamma_J J_h^n(p^n,q) + \gamma_s s_h^n(\bu^n,\bv)}_{=:\mathfrak{T}_{6}}.
\end{align*}

Estimates for
$\mathfrak{T}_1$ and $\mathfrak{T}_4$ are exactly the same
as in \citep[Lemma 5.14]{von2022unfitted}:
\begin{equation}\label{eqn:mathFrakEst1}
\begin{aligned}
|\mathfrak{T}_1|&\lesssim h^q \|{\bf f}^n\|_{H^1(\Omega^n)} \|\bv\|_{\Omega_h^n},\qquad &&|\mathfrak{T}_4|\lesssim (h^q \|p^n\|_{H^1(\Omega^n)}+ h^{m_2} \|p^n\|_{H^{m_2}(\Omega^n)}) \vnorme{\bv}
\end{aligned}
\end{equation}
for any $m_2\ge 1$.
Likewise, the arguments in \citep[Lemma 5.6]{lehrenfeld2019eulerian}
and \citep[Lemma 5.14]{von2022unfitted} show
\begin{equation}\label{eqn:mathFrakEst1B}
|\mathfrak{T}_2|\lesssim (\Delta t+h^q) \|\bu\|_{W^{2,\infty}(\widehat \Q)} \|\bv\|_{\Omega_h^n}\lesssim 
(\Delta t+h^q)\|\bu\|_{W^{3,5}(\calQ)} \|\bv\|_{\Omega_h^n},
\end{equation}
where we used \eqref{u_bound_a3} in the last inequality.

Unlike the problem considered in \citep{von2022unfitted},
the bilinear form $\widehat a_h^n(\cdot,\cdot)$ includes convective terms.
Nonetheless, the same arguments in \citep[Lemma 5.14]{von2022unfitted}
are valid, yielding the following estimate:
\begin{equation}\label{eqn:mathFrakEst2}
\begin{split}
|\mathfrak{T}_3|
&\lesssim (h^q \|\bu\|_{W^{2,\infty}(\widehat \Q)}+h^{m_1} \|\bu^n\|_{H^{m_1+1}(\Omega^n)})\vnorme{\bv}\\
&\lesssim (h^q \|\bu\|_{W^{3,5}(\calQ)}+h^{m_1} \|\bu^n\|_{H^{m_1+1}(\Omega^n)})\vnorme{\bv},
\end{split}
\end{equation}
where $m_1\ge 1$ is only dictated by the regularity of $\bu^n$,
and we have again used \eqref{u_bound_a3}.

On the other hand, the estimate of $\mathfrak{T}_5=b_h^n(\bu^n,q)$ 
should involve the elementwise  scaled $H^1$-norm for the pressure 
(which is nonstandard and not provided in \citep{von2022unfitted}).
Since the extension of $\bu$ is divergence--free, the estimate of $\mathfrak{T}_5$ 
reduces to estimating the boundary term:
\begin{align*}
\mathfrak{T}_5
&=-\int_{\Gamma_h^n}(\bu^n\cdot\bn)q\, ds. 
\end{align*}
Since $\Psi_n(\Gamma_h^n) = \Gamma^n$, there holds
\[
\bu^n\circ \Psi_n =0\qquad \text{on $\Gamma_h^n$}.
\]
Using the estimate $\|\bu^n-\bu^n\circ \Psi_n\|_{\Gamma_h^n}\lesssim h^{q+1}\|\bu^n\|_{H^2(\Omega^n)}$ 
(cf.~\citep[Lemma 7.3]{gross2015trace}) and the discrete trace inequality in Lemma \ref{lem:Dtrace}, we have
\begin{equation}
\label{eqn:mathFrakEst3}
\begin{split}
|\mathfrak{T}_5|
& = \left|\int_{\Gamma_h^n}(\bu^n-\bu^n\circ\Psi_n)\cdot\bn\, q\, ds\right|
 \le \|\bu^n-\bu^n\circ\Psi_n\|_{\Gamma^n_h} \|q\|_{\Gamma^n_h}
 \lesssim h^{q+1}\|\bu^n\|_{H^{2}(\Omega^n)} \|q\|_{\Gamma^n_h}\\
& \lesssim h^{q}\|\bu^n\|_{H^{2}(\Omega^n)} \qnorme{q}.
\end{split}
\end{equation}

Finally,
the consistency term involving ghost stabilization $\mathfrak{T}_6$ 
vanishes
provided $\bu^n\in \bH^{\overline{m}_v+1}(\Omega_{h,e}^n)$ and $p^n\in H^{m_q+1}(\Omega_{h,e}^n)$.
The estimate \eqref{estCc} then follows from \eqref{eqn:mathFrakEst1}--\eqref{eqn:mathFrakEst3}
and the discrete Poincare inequality \eqref{eqn:DPoin}.
\end{proof}

\subsection{Error Estimates}
In this section, we combine the stability and consistency 
estimates to obtain error estimates for the finite element method \eqref{eqn:FEM}.
As a first step, let $(\bu^n_I,p^n_I)\in \bV_h^n\times Q_h^n$ 
be approximations to the exact solution satisfying 
\begin{subequations}
\label{eqn:upInterp}
\begin{equation}
\begin{split}
    \vnorme{\bu^n-\bu^n_I}+|\bu^n-\bu^n_I|_{s_h^n}
     &\lesssim h^{\underline{m}_v} \|\bu^n\|_{H^{\overline{m}_v+1}(\Omega_{h,e}^n)}\lesssim 
     h^{\underline{m}_v} \|\bu^n\|_{H^{\overline{m}_v+1}(\Omega^n)},\\
     \qnorme{p^n-p^n_I}+|p^n-p^n_I|_{J_h^n}
     &\lesssim h^{{m}_q+1} \|p^n\|_{H^{{m}_q+1}(\Omega_{h,e}^n)}\lesssim 
     h^{{m}_q+1} \|p^n\|_{H^{{m}_q+1}(\Omega^n)},
\end{split}
\end{equation}
and
\begin{equation}
\begin{split}
h^{-1} \|\bu^n-\bu^n_I\|_{\Omega_h^n} &\lesssim  h^{\underline{m}_v} \|\bu\|_{H^{\overline{m}_v+1}(\Omega_{h,e}^n)}\lesssim 
     h^{\underline{m}_v} \|\bu\|_{H^{\overline{m}_v+1}(\Omega^n)},\\
\|p^n-p^n_I\|_{\Omega_h^n}+h^{1/2}\|p^n-p_I^n\|_{\Gamma_h^n}&\lesssim 
h^{{m}_q+1} \|p^n\|_{H^{{m}_q+1}(\Omega_{h,e}^n)}\lesssim 
     h^{{m}_q+1} \|p^n\|_{H^{{m}_q+1}(\Omega^n)}.
     \end{split}
\end{equation}  
\end{subequations}
The existence of such $\bu_I^n$ and $p_I^n$ satisfying 
\eqref{eqn:upInterp} follows from the inclusions 
\eqref{eqn:VhInclusions}--\eqref{eqn:QhInclusions}
and standard scaling and interpolation arguments.
{We also assume the initial condition
of the finite element method \eqref{eqn:FEM} is $\bu_h^0 = \bu_I^0$.}

We then split the error
into its interpolation and discretization errors:
\begin{align*}
        \bbU^n 
        &= \underbrace{(\bu^n - \bu^n_I)}_{=:{\bm \eta^n}} +\underbrace{(\bu_I^n - \bu_h^n)}_{=:\be^n_h\in \bV_h^n},\qquad
        \bbP^n  = \underbrace{(p^n - p^n_I)}_{=:\zeta^n} +\underbrace{(p_I^n - p_h^n)}_{=:{\rm d}_h^n\in Q_h^n}.
\end{align*}
Then the pair $(\be^n_h,{\rm d}_h^n)\in \bV_h^n\times Q_h^n$ satisfies
\begin{align}\label{eqn:errorequation}
    \int_{\Omega_h^n} \frac{\be^n_h-\be_h^{n-1}}{\Delta t}\cdot \bv\, dx +a_h^n(\be_h^n,\bv) -b_h^n(\bv,{\rm d}_h^n) + b_h^n(\be_h^n,q) + \gamma_J J_h({\rm d}_h^n,q) = \mathfrak{C}^n_c(\bv,q)+\mathfrak{C}^n_I(\bv,q), 
\end{align}
for all $(\bv,q)\in \bV_h^n\times Q_h^n$, 
where $\mathfrak{C}^n_c(\bv,q)$ is given in Lemma \ref{lem:consistency}
and 
\begin{align*}
\mathfrak{C}^n_I(\bv,q) 
&= -\underbrace{\int_{\Omega_h^n} \frac{{\bm \eta}^n-{\bm \eta}^{n-1}}{\Delta t}\cdot \bv\, dx }_{=:\mathfrak{T}_7}- \underbrace{a_h^n({\bm \eta}^n,\bv)}_{=:\mathfrak{T}_8} + \underbrace{b_h^n(\bv,\zeta^n)}_{=:\mathfrak{T}_9}-\underbrace{b_h^n({\bm \eta}^n,q)}_{=:\mathfrak{T}_{10}}-\underbrace{\gamma_J J_h^n(\zeta^n,q)}_{=:\mathfrak{T}_{11}}.
\end{align*}
We now bound the terms in $\mathfrak{C}^n_I(\bv,q)$ individually.

First, by continuity estimates and the approximation properties \eqref{eqn:upInterp},
we have
\begin{align}\label{eqn:Test8911}
|\mathfrak{T}_i|\lesssim (h^{\underline{m}_v}\|\bu^n\|_{H^{\overline{m}_v+1}(\Omega^n)}+ h^{m_q+1} \|p^n\|_{H^{m_q+1}(\Omega^n)})\vnorme{\bv}\quad i=8,9,11.
\end{align}
For the temporal interpolation error there holds 
by \citep[Lemma 5.7]{lehrenfeld2019eulerian} and the discrete Poincare inequality \eqref{eqn:DPoin},
\begin{equation}\label{eqn:Test7}
\begin{split}
|\mathfrak{T}_7|
&\lesssim h^{\underline{m}_v} \sup_{t\in [0,T]} \left(\|\bu\|_{H^{\overline{m}_v+1}(\Omega(t))}+ \|\bu_t\|_{H^{\overline{m}_v}(\Omega(t))}\right)\|\bv\|_{\Omega_h^n}\\
&\lesssim h^{\underline{m}_v} \sup_{t\in [0,T]} \left(\|\bu\|_{H^{\overline{m}_v+1}(\Omega(t))}+ \|\bu_t\|_{H^{\overline{m}_v}(\Omega(t))}\right)\vnorme{\bv}.
\end{split}
\end{equation}
For $\mathfrak{T}_{10}$, we 
integrate by parts to obtain 
\[
\mathfrak{T}_{10}
= \int_{\Omega_h^n} (\Div {\bm \eta}^n)q\, dx -\int_{\Gamma_h^n} ({\bm \eta}^n\cdot \bn)q\, ds
=\int_{\Omega_h^n} {\bm \eta}^n \cdot \nabla q\, dx + \sum_{F\in \calF_{h,e}^{n}}\int_{F\cap\Omega_h^n} {\bm \eta}^n\cdot\bn \jump{q}\, ds.
\]
Consequently by an elementwise trace inequality and \eqref{eqn:upInterp}, there holds
 \begin{equation}\label{eqn:Test10}
 \begin{split}
|\mathfrak{T}_{10}|  
 &\le \left(\sum_{T\in \calT_{h,e}^{n}}h_T^{-2}\|{\bm \eta}^n\|_{T}^2\right)^{\frac12}
 \left(\sum_{T\in \calT_{h,e}^{n}}h_T^2 \|\nabla q\|_{T}^2\right)^{\frac12}\\
&\qquad   +\left(\sum_{F\in \calF_{h,e}^n}  h_F^{-1} \|{\bm \eta}^n\|_F^2\right)^{\frac12}
   \left(\sum_{F\in \calF_{h,e}^n}  h_F\left\|\jump{q}\right\|_{F}^2\right)^{\frac12}\\
&
\lesssim \left(\sum_{T\in \calT_{h,e}^{n}}(h_T^{-2}\|{\bm \eta}^n\|_{T}^2+ \|\nabla{\bm \eta}^n\|_{T}^2)\right)^{\frac12}
 \left(\rev{h^2}\sum_{T\in \calT_{h,e}^{n}} \|\nabla q\|_{T}^2+ \rev{h}\sum_{F\in \calF_{h,e}^n}  \left\|\jump{q}\right\|_{F}^2\right)^{\frac12} 
   \\
 &\lesssim  
 h^{\underline{m}_v} \|\bu^n\|_{H^{\overline{m}_v+1}(\Omega^n)} \qnorme{q}.
 \end{split}
\end{equation}
Summarizing \eqref{eqn:Test8911}--\eqref{eqn:Test10}, we proved the bound
\begin{equation}\label{estCI}
\begin{split}
    |\mathfrak{C}_I^n(\bv,q)|\lesssim 
    \Big(h^{\underline{m}_v} \sup_{t\in [0,T]} (\|\bu\|_{H^{\overline{m}_v+1}(\Omega(t))}+ 
    \|\bu_t\|_{H^{\overline{m}_v}(\Omega(t))}) + 
    h^{m_q+1} &\|p^n\|_{H^{m_q+1}(\Omega^n)}\Big) \\
    & \times\big( \vnorme{\bv}+\qnorme{q} \big).
\end{split}
\end{equation}

From \eqref{estCc} and \eqref{estCI}  it follows that the functionals $\mathfrak{C}_c^n$ and $\mathfrak{C}_I^n$ are  bounded
as 
\begin{align*}
|\mathfrak{C}_c^n(\bv,q)|+|\mathfrak{C}_I^n(\bv,q)|
\le C(\Delta t+h^q+h^{\underline{m}_v}+h^{{m}_q}) (\vnorme{\bv}+ \qnorme{q}),
\end{align*}
where $C>0$ depends on  Sobolev norms of the exact solution and the source function.

Note that $\be^n_h$ and $d_h^n$ satisfy the same FE formulation \eqref{eqn:FEM} as $\bu^n_h$ and $p_h^n$ but with the zero initial condition and the right-hand side functional given by 
$F^n(\bv,q)=\mathfrak{C}_c^n(\bv,q)+\mathfrak{C}_I^n(\bv,q)$.
Therefore, we can  apply the stability results from Lemmas~\ref{L1} and~\ref{L2} to estimate $\be^n_h$ and $d_h^n$:
    \begin{equation*}
\|\be_h^k\|^2_{\Omega^k_h} + 
    \Delta t \sum_{n=1}^k \left(\tnorm{\be_h^n}_{n}^2+\qnorme{d_h^n}^2\right)\le C(\Delta t+h^q+h^{\underline{m}_v}+h^{{m}_q+1})^2.
    \end{equation*}

Applying the triangle inequality and the estimates \eqref{eqn:upInterp} one more time leads to our final result. 

\begin{theorem} \label{Th1} Assume the solution $(\bu,p)$ to \eqref{eqn:Stokes} is sufficiently smooth and let $\bu^n_h,p^n_h$ be the solution to \eqref{eqn:FEM}. Assume that the discretization parameters satisfy $h^2\lesssim \Delta t \lesssim h$. The following error estimate holds 
\begin{multline}
\max_{1\le n\le N}\|\bu(t^n)- \bu_h^n\|^2_{\Omega^n_h} + 
    \Delta t \sum_{n=1}^N \left(\tnorm{\bu(t^n)- \bu_h^n}_{n}^2+\qnorme{p(t^n)- p_h^n}^2\right) \\
    \le C(\Delta t+h^q+h^{\underline{m}_v}+h^{{m}_q+1})^2,
\end{multline}
with a constant $C$ independent of discretization parameters but dependent on the solution $(\bu,p)$ and final time $T$.
\end{theorem}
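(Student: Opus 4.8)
The plan is to combine the error decomposition already set up with the stability bounds of Lemmas~\ref{L1}--\ref{L2}, the consistency estimate of Lemma~\ref{lem:consistency}, and the interpolation bounds \eqref{eqn:upInterp}. Write $\bbU^n={\bm \eta}^n+\be_h^n$ and $\bbP^n=\zeta^n+{\rm d}_h^n$ with $({\bm \eta}^n,\zeta^n)$ the interpolation errors obeying \eqref{eqn:upInterp} and $(\be_h^n,{\rm d}_h^n)\in\bV_h^n\times Q_h^n$ the discretization errors. The observation that drives the proof is that $(\be_h^n,{\rm d}_h^n)$ solves the error equation \eqref{eqn:errorequation}, which is algebraically identical to the scheme \eqref{eqn:FEM} with vanishing initial data --- since $\bu_h^0=\bu_I^0$ forces $\be_h^0=0$ --- and with right-hand side functional $F^n(\bv,q)=\mathfrak{C}_c^n(\bv,q)+\mathfrak{C}_I^n(\bv,q)$. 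Hence the theorem follows once I bound $\|F^n\|_\ast$ in the sense of \eqref{eqn:Fnorm}, run the stability machinery of Lemmas~\ref{L1}--\ref{L2} on \eqref{eqn:errorequation}, and finish with a triangle inequality.

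First I would estimate $\|F^n\|_\ast$. For the genuine consistency part, Lemma~\ref{lem:consistency} with $m_1=\overline m_v$ and $m_2=m_q+1$ supplies the bound; the only point at which the structure of the continuous problem enters is the term $b_h^n(\bu^n,q)$, which is benign precisely because the velocity extension \eqref{e:extensiont_cont} is divergence-free, so it collapses to the boundary contribution $\mathfrak{T}_5$ and is controlled through the geometric estimate $\|\bu^n-\bu^n\circ\Psi_n\|_{\Gamma_h^n}\lesssim h^{q+1}\|\bu^n\|_{H^2(\Omega^n)}$ and the discrete trace inequality of Lemma~\ref{lem:Dtrace}, producing the $h^q$ factor against $\qnorme{q}$. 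For the interpolation part I would use the bounds \eqref{eqn:Test8911}--\eqref{eqn:Test10} assembled above, where the weighted pressure norm emerges naturally from $\mathfrak{T}_{10}$ after an integration by parts and an elementwise trace inequality. Together this yields $\|F^n\|_\ast\lesssim \Delta t+h^q+h^{\underline m_v}+h^{m_q+1}$, uniformly in $n$ (using $h^{\overline m_v}\lesssim h^{\underline m_v}$).

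Next I would apply the stability estimates to \eqref{eqn:errorequation}. Lemma~\ref{L1} with zero initial data gives, for any $\ep>0$,
\[
\|\be_h^k\|_{\Omh}^2+\Delta t\sum_{n=1}^k\Big(\tfrac14\vnorm{\be_h^n}^2+\gamma_J|{\rm d}_h^n|_{J_h^n}^2\Big)\lesssim \Delta t\sum_{n=0}^k\|F^n\|_\ast^2+\ep\,\Delta t\sum_{n=0}^k\qnorme{{\rm d}_h^n}^2 ,
\]
and Lemma~\ref{L2} --- this is where the hypothesis $h^2\lesssim\Delta t$ is used --- controls the trailing pressure sum by $\Delta t\sum_n\|F^n\|_\ast^2$, so that fixing $\ep$ small enough to absorb the residual pressure term closes the loop and produces
\[
\|\be_h^k\|_{\Omh}^2+\Delta t\sum_{n=1}^k\big(\tnorm{\be_h^n}_n^2+\qnorme{{\rm d}_h^n}^2\big)\lesssim \big(\Delta t+h^q+h^{\underline m_v}+h^{m_q+1}\big)^2 .
\]
A final triangle inequality together with \eqref{eqn:upInterp} transfers this to $\bbU^n=\bu(t^n)-\bu_h^n$ and $\bbP^n=p(t^n)-p_h^n$ and gives the asserted estimate; the ``sufficiently smooth'' hypothesis on $(\bu,p)$ is exactly what is needed in Lemma~\ref{lem:consistency} and in \eqref{eqn:upInterp}, namely control of $\bu$ in $L^\infty(H^{\overline m_v+1})\cap W^{3,5}(\calQ)$, of $\bu_t$ in $L^\infty(H^{\overline m_v})$, of $p$ in $L^\infty(H^{m_q+1})$, and of $\blf$ in $L^\infty(H^1)$.

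\textbf{The main difficulty.} The crux is the coupling between Lemmas~\ref{L1} and~\ref{L2}. Lemma~\ref{L1} alone cannot dispose of the term $\ep\,\Delta t\sum\qnorme{{\rm d}_h^n}^2$ on its right-hand side, because the discrete time difference $(\be_h^n-\be_h^{n-1})/\Delta t$ fails to be weakly divergence-free against $Q_h^n$ and so cannot be tested away as in the stationary-mesh case. Regaining pressure control therefore forces \emph{both} the non-standard weighted $H^1$ pressure norm $\qnorme{\cdot}$ \emph{and} the lower time-step bound $h^2\lesssim\Delta t$: these are precisely what make the factor $\frac{h^2}{\Delta t}\|\be_h^n-\be_h^{n-1}\|_{\Omh}^2$ appearing in \eqref{eqn:L2Start} absorbable into the velocity energy already estimated in Lemma~\ref{L1}. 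Together with the upper restriction $\Delta t\lesssim h$ --- needed so that the ghost-penalty reach $L$ stays $O(1)$ in \eqref{eqn:ExtBound} --- this is the source of the two-sided hypothesis $h^2\lesssim\Delta t\lesssim h$. Everything else (the $O(h^{q+1})$ geometric perturbations carried by $\Psi_n$, the Nitsche and ghost-penalty consistency terms, and the interpolation estimates) is routine once the preliminary lemmas are available.
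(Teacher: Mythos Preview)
Your proposal is correct and follows essentially the same approach as the paper: you recognize that $(\be_h^n,{\rm d}_h^n)$ satisfies an instance of \eqref{eqn:FEM} with zero initial data and right-hand side $F^n=\mathfrak{C}_c^n+\mathfrak{C}_I^n$, bound $\|F^n\|_\ast$ via \eqref{estCc}--\eqref{estCI}, invoke Lemmas~\ref{L1}--\ref{L2} (using $h^2\lesssim\Delta t$ in the latter), and finish with a triangle inequality and \eqref{eqn:upInterp}. Your identification of the main difficulty---the pressure coupling resolved by the weighted $H^1$ pressure norm together with the two-sided time-step restriction---matches the paper's reasoning exactly.
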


\rev{
\begin{remark}\rm
Compared to the error estimates in \citep{von2022unfitted},
Theorem \ref{Th1} provide optimal-order error estimates
for the velocity and pressure approximations.
The key tool that differentiates our result is the application of 
a scaled $L^2(H^1)$ norm for the pressure approximation.
This strategy provides the flexibility to effectively handle the 
non-divergence-free property 
of the discrete time derivative $(\bu^n-\bu_h^{n-1})/\Delta t$ 
in the stability analysis under the mesh constraint $h^2\lesssim \Delta t \lesssim h$
(cf.~\citep[Lemma 5.10]{von2022unfitted} and Lemma \ref{L2}).
\end{remark}
}

\rev{
\begin{remark}\rm
In the Oseen problem \ref{eqn:Stokes}, we have implicitly
taken the viscosity $\nu=1$ to simplify the presentation.
If $\Delta \bu$ is replaced by $\nu \Delta \bu$,
then the velocity ghost penalty term in CutFEM discretization 
needs to scale like $\nu$ to perform the convergence and stability
analysis. Also, $\gamma_s s_h^n(\cdot,\cdot)$
would be replaced by $\nu^{-1} \gamma_s s_h^n(\cdot,\cdot)$.
In this general setting, a version of Theorem \ref{Th1}
holds, but the constant $C>0$ scales like $\exp(\nu^{-1}T)$; see \citep{von2022unfitted} for details.
\end{remark}
}

\section{Examples of finite element pairs satisfying Assumption \ref{ass:Pair}}\label{sec:Examples}
In this section, we show that several canonical finite element pairs 
for the Stokes problem satisfy the three inequalities \eqref{eqn:DinfSupExtra}
in Assumption \ref{ass:Pair}.

\subsection{The Mini element}
For a tetrahedron $T\in \calT_h$, let $b_T\in \pol_{4}(T)$
denote the standard quartic bubble function, i.e.,
the product of the barycentric coordinates of $T$.
The lowest-order Mini pair with respect to $\calT_h$ is given by \citep{mini84}
\begin{align*}
    \bV_h &= \{\bv\in \bH^1(\whO):\ \bv|_T\in \bpol_1(T)+b_T\bpol_0(T)\ \forall T\in \calT_h\},\\
    Q_h & = \{q\in H^1(\whO):\ q|_T\in \pol_1(T)\ \forall T\in \calT_h\}.
\end{align*}
In this setting we can take $\underline{m}_v = 1$, $\overline{m}_v=4$ and $m_q =1$.

We now verify conditions \eqref{eqn:DinfSupExtra}.
Given $q\in Q_h^n$, we  set $\bv\in \bV_h^n$
so that $\bv|_T = h_T^2 b_T \nab q|_T$ for all $T\in \calT_h^{n,i}$.
The function $\bv$ is extended to $\Omega^n_e$ by zero.
The results in \citep[Section 6.5]{guzman2016inf} show that 
\eqref{eqn:DinfSupExtra1}--\eqref{eqn:DinfSupExtra2} is satisfied.
We also have by a simple scaling argument
\begin{align*}
 \|\bv\|_{\Omega^n}^2 &= \sum_{T\in \calT_h^{n,i}} \|\bv\|_T^2
= \sum_{T\in \calT_h^{n,i}} h_T^4 \|b_T\nab q\|_T^2
\simeq \sum_{T\in \calT_h^{n,i}} h_T^4 \|\nab q\|_T^2
\lesssim h^2 \qnorm{q}^2.
\end{align*}
Thus, \eqref{eqn:DinfSupExtra3} is satisfied as well.

\subsection{The Taylor-Hood pair}
The (generalized) Taylor--Hood finite element pair is given by
\begin{align*}
    \bV_h &= \{\bv\in \bH^1(\whO):\ \bv|_T\in \bpol_m(T)\ \forall T\in \calT_h\},\\
    Q_h & = \{q\in H^1(\whO):\ q|_T\in \pol_{m-1}(T)\ \forall T\in \calT_h\},
\end{align*}
where $m\ge 2$.  Thus, in this case $\overline{m}_v= \underline{m}_v = m$
and $m_q = m-1$ in \eqref{eqn:VhInclusions}--\eqref{eqn:QhInclusions}.
Denote by $\calE_h^{n,i}$ the set of interior one-dimensional edges
of the the interior triangulation $\calT_h^{n,i}$.  We then 
denote by $\tilde \calT_h^{n,i}$ the members in $\calT_h^{n,i}$
that have at least three edges in $\calE^{n,i}_h$ (cf.~Remark \ref{rem:AssumeGeneral}).  We assume that
the domain of pressure ghost-stabilization is chosen such that \eqref{eqn:qExtend} is satisfied.
This is the case provided $c_{\delta_h}$ is sufficiently large (but still $O(1)$).

We denote the set of interior edges of $\tilde \calT_h^{n,i}$
by $\tilde \calE_h^{n,i}$.
Then for $e\in \tilde \calE_h^{n,i}$,
we let $\phi_e$ denote the quadratic bubble function associated with $e$,
and let $\bt_e$ be a unit tangent vector of $e$.
Note that $\phi_e$ has support on the tetrahedra that have $e$ as an edge,
and the number of such tetrahedra is uniformly bounded due to the shape-regularity
of $\tilde \calT_{h,i}^n$.

For a given $q\in Q_h^n$, we define
\[
\bv = \sum_{e\in \tilde \calE_h^{n,i}} h_e^2 \phi_e (\nabla q\cdot \bt_e) \bt_e.
\]
Because $q$ is continuous, we see that $\nabla q\cdot \bt_e$ is single-valued
on $e$, and thus $\bv$ is continuous and a piecewise polynomial of degree
$m$; hence, $\bv\in \bV_h^n$.  

It is shown in \citep[Section 6.1]{guzman2016inf}
that \eqref{eqn:DinfSupExtra1}--\eqref{eqn:DinfSupExtra2} is satisfied,
\rev{thus it remains to show} \eqref{eqn:DinfSupExtra3}.
This follows from the identity
$\|\phi_e\|_{\infty}=1$ and 
the shape-regularity \rev{and quasi-uniformity} of the triangulation:
\begin{align*}
\|\bv\|_{\tilde \Omega_h^n}
\lesssim   \rev{h^2} \sum_{T\in \tilde \calT_{h,i}^{n}}  \|\nab q\|_{T}^2 .
\end{align*}

\subsection{The $\pol_3-\pol_0$ pair}
As our final example, we consider the $\bpol_3-\pol_0$ pair.
In particular,
the discrete velocity space is the cubic Lagrange space,
and the discrete pressure space consists of piecewise constants:
\begin{align*}
    \bV_h & = \bpol_3^c(\calT_h) = \{\bv\in \bH^1(\whO):\ \bv|_T\in \bpol_3(T)\ \forall T\in \calT_h\},\\
    Q_h & = \pol_0(\calT_h) = \{q\in L^2(\whO):\ q|_T\in \pol_0(T)\ \forall T\in \calT_h\}.
\end{align*}
For each interior face $F\in \calF_{h,i}^{n}$ with $F = \p T_1\cap \p T_2$,
we denote by $\bn_j$ the outward unit normal of $\p T_j$ restricted to $F$.
Then for given $q\in Q^n_h$, we define $\bv\in \bV^n_h$ such that for all $F\in \calF_{h,i}^{n}$,
\[
\int_F \bv\cdot \bn_1\, ds
= -h_F \int_F (q_1\bn_1+q_2\bn_2)\cdot \bn_1\, ds = h_F \int_F \jump{q}\cdot \bn_1\, ds,
\]
where $q_j = q|_{T_j}$.  
Note that this condition implies $\int_F \bv\cdot \bn_{F }\, ds = -h_F \int_F\jump{q}\cdot \bn_{F}\, ds$
for any unit normal of $F\in \calF_{h,i}^{n}$.
We further specify that $\bv=0$ on all vertices and edges in $\calT_{h,i}^{n}$,
$\bv\times \bn_F=0$ on all faces $F\in \calF_{h,i}^{n}$, and $\bv=0$ on the boundary of $\Omega_{h,i}^{n}$.
We extend $\bv$ to $\Omega_{h,e}^n$ by zero.

By the divergence theorem, and using that $q$ is piecewise constant,
we have 
\begin{align*}
b_h^n(\bv,q) = \int_{\Omega_{h,i}^n} ({\rm div}\,\bv)q\, dx
&= -\sum_{T\in \calT_{h,i}^{n}} \int_{\p T} q (\bv\cdot \bn_{\p T})\, ds
 \rev{\gtrsim} \rev{h} \sum_{F\in \calF_{h,i}^{n}} \left\|\jump{q}\right\|_F^2 = \qnorm{q}^2.
 \end{align*}
 Thus, \eqref{eqn:DinfSupExtra2} is satisfied.
A scaling argument also yields on each $T\in \calT_{h,i}^{n}$,
\begin{align*}
    |\bv|_{H^m(T)}
    %
    &\lesssim h_T^{2-2m} \sum_{\calF_{h,i}^{n}\ni F\subset \p T} h_F \left\|\jump{q}\right\|_{F}^2.
\end{align*}
Consequently, by another scaling argument,
\begin{align*}
    \vnorme{\bv}^2 &\lesssim \|\nab \bv\|_{\Omega_{h,i}^{n}}^2+h^{-2}\|\bv\|_{\Omega_{h,i}^n}^2
    \lesssim \rev{h} \sum_{F\in \calF_{h,i}^{n}}
     \left\|\jump{q}\right\|_{F}^2 = \qnorm{q}^2,\\
    \|\bv\|_{\Omega_h^{n}}^2 &= \|\bv\|_{\Omega_{h,i}^{n}}^2 \lesssim \rev{h^3} \sum_{F\in \calF_{h,i}^{n,i}}
     \|\jump{q}\|_{F}^2 = h^2 \qnorm{q}^2,
\end{align*}
and therefore \eqref{eqn:DinfSupExtra1} and \eqref{eqn:DinfSupExtra3} are satisfied as well.

\appendix

\section{Proof of Lemma \ref{lem:Dtrace}}\label{app:Dtrace}
We first note that if $Q_h^n\subset H^1(\Omhe)$, then a standard
trace inequality and the definition of $\qnorme{\cdot}$ yields
\begin{equation}\label{eqn:DtraceP}
\|q\|_{\Gamma_h^n} \lesssim \|q\|_{H^1(\Omh)}\lesssim h^{-1}\qnorme{q}+\|q\|_{\Omega_h^n}.
\end{equation}
To establish \eqref{eqn:Dtrace2} in this case, we first apply a standard Poincare-Friedrich inequality
\[
\|q\|_{\Omega_{h,i}^n}\lesssim \|\nab q\|_{\Omega_{h,i}^n}\qquad \forall q\in \mathring{L}^2(\Omega_{h,i}^n)\cap H^1(\Omega_{h,i}^n),
\]
and \eqref{eqn:normEquivy} to conclude
\begin{align*}
    \|q\|_{\Omega_h^n}\lesssim \|q\|_{\Omega_{h,i}^n} +|q|_{J_h^n}\lesssim \|\nab q\|_{\Omega_{h,i}^n}+|q|_{J_h^n}
    \lesssim h^{-1}\big(\qnorm{q}+|q|_{J_h^n}\big)\lesssim h^{-1}\qnorme{q}\quad \forall q\in Q_h^n.
\end{align*}
The estimate \eqref{eqn:Dtrace2} then follows from this inequality and \eqref{eqn:DtraceP}.

Thus, it suffices to prove \eqref{eqn:Dtrace2}
in the case $Q_h^n$ consists of discontinuous polynomials.
To this end, 
we introduce an enriching operator
$E_h:Q_h^n\to Q_h^n\cap H^1(\Omhe)$ constructed
by averaging \citep{BrennerScott3}.
Let
\[
\calT^n_T = \{T'\in \calT_{h,e}^n:\ \bar T\cap \bar T'\neq \emptyset\},
\]
and let $\calF_T^{n,I}$ denote the set of {\em interior} faces
of $\calT^n_T$.  Then there holds
\begin{align}\label{eqn:Enrichy}
|q-E_h q|_{H^\ell(T)}^2 &\lesssim \rev{h_T^{1-2\ell}} \sum_{F\in \calF^{n,I}_T} \left\|\jump{q}\right\|_{L^2(F)}^2\qquad \ell=0,1.
\end{align}

It then follows from \eqref{eqn:Enrichy}
and the trace inequality
\begin{align*}
\|q\|_{T\cap \Gamma_h^n} \lesssim h_T^{-1/2} \|q\|_T + h_T^{1/2} \|\nab q\|_T\qquad \forall q\in H^1(T)
\end{align*}
that
\begin{equation}
\label{eqn:EstqEBndy}
\begin{split}
\|q-E_h q\|_{\Gamma_h}^2 
&= \sum_{T\in \calT_{h,e}^n} \|q-E_h q\|_{T\cap \Gamma_h^n}^2\\
&\lesssim \sum_{T\in \calT_{h,e}^n}  \left(h_T^{-1} \|q-E_h q \|_T^2 + h_T  \|\nab (q-E_h q)\|_T^2\right)\\
%
%
&\lesssim \sum_{F\in \calF_{h,e}^{n}} \left\|\jump{q}\right\|_F^2\lesssim h^{-1}\qnorme{q}^2.
 \end{split}
 \end{equation}
 Furthermore by a standard trace inequality and \eqref{eqn:Enrichy}, we have
\begin{equation}\label{eqn:EstqEBndy2}
\begin{split}
\|E_h q\|_{\Gamma_h^n}^2
&\lesssim \|E_h q\|_{H^1(\Omh)}^2\le \|E_h q\|_{H^1(\Omhe)}^2\\
&\lesssim  \sum_{T\in \calT_{h,e}^n} \|q\|_{H^1(T)}^2 + \rev{h^{-1}}\sum_{F\in \calF^n_{h,e}} \left\|\jump{q}\right\|_F^2\\
&\lesssim h^{-2} \qnorme{q}^2+\|q\|_{\Omhe}^2.
\end{split}
\end{equation}
Combining \eqref{eqn:EstqEBndy}--\eqref{eqn:EstqEBndy2} yields 
\begin{align}
\label{eqn:DtraceP2}
\|q\|_{\Gamma_h^n}\lesssim h^{-1} \qnorme{q}+ \|q\|_{\Omhe}.
\end{align}

Finally, since $q|_{\Omega_{h,i}^n}\in \mathring{L}^2(\Omega_{h,i}^n)$,
we apply the discrete Poincare-Friedrich inequality \citep[Theorem 10.6.12]{BrennerScott3}
\begin{align*}
    \|q\|_{\Omega_{h,i}^n}^2
    &\lesssim \sum_{T\in \calT_{h,i}^n} \|\nab q\|_T^2 + 
    \rev{h^{-1}}\sum_{F\in \calF_{h,i}^n} \left\|\jump{q}\right\|_F^2\lesssim h^{-2} \qnorm{q}^2,
\end{align*}
and \eqref{eqn:normEquivy} to conclude
\begin{align*}
    \|q\|_{\Omega_{h,e}^n}\lesssim \|q\|_{\Omega_{h,i}^n}+|q|_{J_h^n}
    \lesssim h^{-1}\big(\qnorm{q} +|q|_{J_h^n}\big)\lesssim h^{-1}\qnorme{q}.
\end{align*}
Combined with \eqref{eqn:DtraceP2}, we obtain \eqref{eqn:Dtrace2}.

\section*{Acknowledgments}
This work is supported in part
by the NSF through the grants DMS-2309425 (Neilan)
 and DMS-2309197 (Olshanskii).

\bibliographystyle{plain}
\bibliography{literatur}

\begin{thebibliography}{10}

\bibitem{mini84}
D.~N. Arnold, F.~Brezzi, and M.~Fortin.
\newblock A stable finite element for the {S}tokes equations.
\newblock {\em Calcolo}, 21(4):337--344 (1985), 1984.

\bibitem{bazilevs2013computational}
Yuri Bazilevs, Kenji Takizawa, and Tayfun~E Tezduyar.
\newblock {\em Computational fluid-structure interaction: methods and
  applications}.
\newblock John Wiley \& Sons, 2013.

\bibitem{besier2012pressure}
Michael Besier and Winnifried Wollner.
\newblock On the pressure approximation in nonstationary incompressible flow
  simulations on dynamically varying spatial meshes.
\newblock {\em Internat. J. Numer. Methods Fluids}, 69(6):1045--1064, 2012.

\bibitem{brenner2014priori}
Andreas Brenner, Eberhard B\"{a}nsch, and Markus Bause.
\newblock {\it {A} priori} error analysis for finite element approximations of
  the {S}tokes problem on dynamic meshes.
\newblock {\em IMA J. Numer. Anal.}, 34(1):123--146, 2014.

\bibitem{BrennerScott3}
Susanne~C. Brenner and L.~Ridgway Scott.
\newblock {\em The mathematical theory of finite element methods}, volume~15 of
  {\em Texts in Applied Mathematics}.
\newblock Springer, New York, third edition, 2008.

\bibitem{BuffaOrtner09}
Annalisa Buffa and Christoph Ortner.
\newblock Compact embeddings of broken {S}obolev spaces and applications.
\newblock {\em IMA J. Numer. Anal.}, 29(4):827--855, 2009.

\bibitem{B10}
Erik Burman.
\newblock Ghost penalty.
\newblock {\em C. R. Math. Acad. Sci. Paris}, 348(21-22):1217--1220, 2010.

\bibitem{cutFEM}
Erik Burman, Susanne Claus, Peter Hansbo, Mats~G. Larson, and Andr\'{e}
  Massing.
\newblock Cut{FEM}: discretizing geometry and partial differential equations.
\newblock {\em Internat. J. Numer. Methods Engrg.}, 104(7):472--501, 2015.

\bibitem{burman2022eulerian}
Erik Burman, Stefan Frei, and Andre Massing.
\newblock Eulerian time-stepping schemes for the non-stationary {S}tokes
  equations on time-dependent domains.
\newblock {\em Numer. Math.}, 150(2):423--478, 2022.

\bibitem{burman2012fictitious}
Erik Burman and Peter Hansbo.
\newblock Fictitious domain finite element methods using cut elements: {II}.
  {A} stabilized {N}itsche method.
\newblock {\em Appl. Numer. Math.}, 62(4):328--341, 2012.

\bibitem{claus2019cutfem}
Susanne Claus and Pierre Kerfriden.
\newblock A {C}ut{FEM} method for two-phase flow problems.
\newblock {\em Comput. Methods Appl. Mech. Engrg.}, 348:185--206, 2019.

\bibitem{costabel2010bogovskiui}
Martin Costabel and Alan McIntosh.
\newblock On {B}ogovski{\u\i} and regularized {P}oincar{\'e} integral operators
  for de {R}ham complexes on {L}ipschitz domains.
\newblock {\em Mathematische Zeitschrift}, 265(2):297--320, 2010.

\bibitem{duarte2004arbitrary}
Fabi\'{a}n Duarte, Ra\'{u}l Gormaz, and Srinivasan Natesan.
\newblock Arbitrary {L}agrangian-{E}ulerian method for {N}avier-{S}tokes
  equations with moving boundaries.
\newblock {\em Comput. Methods Appl. Mech. Engrg.}, 193(45-47):4819--4836,
  2004.

\bibitem{formaggia1999stability}
L.~Formaggia and F.~Nobile.
\newblock A stability analysis for the arbitrary {L}agrangian {E}ulerian
  formulation with finite elements.
\newblock {\em East-West J. Numer. Math.}, 7(2):105--131, 1999.

\bibitem{formaggia2010cardiovascular}
Luca Formaggia, Alfio Quarteroni, and Allesandro Veneziani.
\newblock {\em Cardiovascular Mathematics: Modeling and simulation of the
  circulatory system}, volume~1.
\newblock Springer Science \& Business Media, 2010.

\bibitem{GR}
V.~Girault and P.~A. Raviart.
\newblock {\em Finite Element Methods for {{N}avier-{S}tokes} Equations}.
\newblock Springer, Berlin, 1986.

\bibitem{glowinski1999distributed}
R.~Glowinski, T.-W. Pan, T.~I. Hesla, and D.~D. Joseph.
\newblock A distributed {L}agrange multiplier/fictitious domain method for
  particulate flows.
\newblock {\em Int. J. Multiph. Flow}, 25(5):755--794, 1999.

\bibitem{gross2015trace}
Sven Gross, Maxim~A. Olshanskii, and Arnold Reusken.
\newblock A trace finite element method for a class of coupled bulk-interface
  transport problems.
\newblock {\em ESAIM Math. Model. Numer. Anal.}, 49(5):1303--1330, 2015.

\bibitem{gross2011numerical}
Sven Gross and Arnold Reusken.
\newblock {\em Numerical methods for two-phase incompressible flows},
  volume~40.
\newblock Springer Science \& Business Media, 2011.

\bibitem{guzman2016inf}
Johnny Guzm\'{a}n and Maxim Olshanskii.
\newblock Inf-sup stability of geometrically unfitted {S}tokes finite elements.
\newblock {\em Math. Comp.}, 87(313):2091--2112, 2018.

\bibitem{hirt1974arbitrary}
C.~W. Hirt, A.~A. Amsden, and J.~L. Cook.
\newblock An arbitrary {L}agrangian-{E}ulerian computing method for all flow
  speeds [{J}. {C}omput. {P}hys. {\bf 14} (1974), no. 3, 227--253].
\newblock {\em J. Comput. Phys.}, 135(2):198--216, 1997.
\newblock With an introduction by L. G. Margolin, Commemoration of the 30th
  anniversary \{of J. Comput. Phys.\}.

\bibitem{KeslerLanSun21}
Ian Kesler, Rihui Lan, and Pengtao Sun.
\newblock The arbitrary {L}agrangian-{E}ulerian finite element method for a
  transient stokes/parabolic interface problem.
\newblock {\em Int. J. Numer. Anal. Model.}, 18(3):339--361, 2021.

\bibitem{lehrenfeld2019eulerian}
Christoph Lehrenfeld and Maxim Olshanskii.
\newblock An {E}ulerian finite element method for {PDE}s in time-dependent
  domains.
\newblock {\em ESAIM Math. Model. Numer. Anal.}, 53(2):585--614, 2019.

\bibitem{lozovskiy2018quasi}
Alexander Lozovskiy, Maxim~A. Olshanskii, and Yuri~V. Vassilevski.
\newblock A quasi-{L}agrangian finite element method for the {N}avier-{S}tokes
  equations in a time-dependent domain.
\newblock {\em Comput. Methods Appl. Mech. Engrg.}, 333:55--73, 2018.

\bibitem{ma2023high}
Chuwen Ma, Tian Tian, and Weiying Zheng.
\newblock High-order unfitted characteristic finite element methods for moving
  interface problem of {O}seen equations.
\newblock {\em J. Comput. Appl. Math.}, 425:Paper No. 115028, 29, 2023.

\bibitem{massing2014stabilized}
Andr\'{e} Massing, Mats~G. Larson, Anders Logg, and Marie~E. Rognes.
\newblock A stabilized {N}itsche fictitious domain method for the {S}tokes
  problem.
\newblock {\em J. Sci. Comput.}, 61(3):604--628, 2014.

\bibitem{masud1997space}
Arif Masud and Thomas J.~R. Hughes.
\newblock A space-time {G}alerkin/least-squares finite element formulation of
  the {N}avier-{S}tokes equations for moving domain problems.
\newblock {\em Comput. Methods Appl. Mech. Engrg.}, 146(1-2):91--126, 1997.

\bibitem{miyakawa1982existence}
Tetsuro Miyakawa and Yoshiaki Teramoto.
\newblock Existence and periodicity of weak solutions of the {N}avier-{S}tokes
  equations in a time dependent domain.
\newblock {\em Hiroshima Math. J.}, 12(3):513--528, 1982.

\bibitem{neustupa2009existence}
Ji\v{r}\'{\i} Neustupa.
\newblock Existence of a weak solution to the {N}avier-{S}tokes equation in a
  general time-varying domain by the {R}othe method.
\newblock {\em Math. Methods Appl. Sci.}, 32(6):653--683, 2009.

\bibitem{olshanskii2023eulerian}
Maxim~A Olshanskii, Arnold Reusken, and Paul Schwering.
\newblock An {E}ulerian finite element method for tangential {N}avier--{S}tokes
  equations on evolving surfaces.
\newblock {\em arXiv preprint arXiv:2302.00779}, 2023.

\bibitem{peskin1977numerical}
Charles~S. Peskin.
\newblock Numerical analysis of blood flow in the heart.
\newblock {\em J. Comput. Phys.}, 25(3):220--252, 1977.

\bibitem{preussmaster}
J.~{Preu\ss}.
\newblock {Higher order unfitted isoparametric space-time FEM on moving
  domains}.
\newblock Master's thesis, NAM, University of G\"ottingen, 2018.

\bibitem{san2009convergence}
Jorge San~Mart\'{\i}n, Loredana Smaranda, and Tak\'{e}o Takahashi.
\newblock Convergence of a finite element/{ALE} method for the {S}tokes
  equations in a domain depending on time.
\newblock {\em J. Comput. Appl. Math.}, 230(2):521--545, 2009.

\bibitem{solonnikov1977solvability}
Vsevolod~Alekseevich Solonnikov.
\newblock Solvability of a problem on the motion of a viscous incompressible
  fluid bounded by a free surface.
\newblock {\em Mathematics of the USSR-Izvestiya}, 11(6):1323, 1977.

\bibitem{solonnikov1987transient}
Vsevolod~Alekseevich Solonnikov.
\newblock On the transient motion of an isolated volume of viscous
  incompressible fluid.
\newblock {\em Izvestiya Rossiiskoi Akademii Nauk. Seriya Matematicheskaya},
  51(5):1065--1087, 1987.

\bibitem{stein2016singular}
Elias~M Stein.
\newblock {\em Singular integrals and differentiability properties of functions
  (PMS-30)}, volume~30.
\newblock Princeton university press, 2016.

\bibitem{tezduyar1992new}
T.~E. Tezduyar, M.~Behr, S.~Mittal, and J.~Liou.
\newblock A new strategy for finite element computations involving moving
  boundaries and interfaces---the deforming-spatial-domain/space-time
  procedure. {II}. {C}omputation of free-surface flows, two-liquid flows, and
  flows with drifting cylinders.
\newblock {\em Comput. Methods Appl. Mech. Engrg.}, 94(3):353--371, 1992.

\bibitem{von2022unfitted}
Henry von Wahl, Thomas Richter, and Christoph Lehrenfeld.
\newblock An unfitted {E}ulerian finite element method for the time-dependent
  {S}tokes problem on moving domains.
\newblock {\em IMA J. Numer. Anal.}, 42(3):2505--2544, 2022.

\end{thebibliography}

\end{document}